\documentclass[letter]{amsart}
\input{defs.tex}

\begin{document}

\author[\'Eric Fusy]{\'{E}ric Fusy$^{*}$}
\thanks{$^{*}$LIGM, CNRS, Univ Gustave Eiffel, ESIEE Paris, F-77454 Marne-la-Vall\'ee, France, eric.fusy@univ-eiffel.fr}

\title[A model of trees for 5-connected planar triangulations]{A model of trees for 5-connected planar  triangulations}
\date{\today}

\begin{abstract}
Triangulations of the 5-gon with no separating triangle nor quadrangle, so called 5c-triangulations, are a planar map family closely related to 5-connected planar triangulations. We show that 5c-triangulations are in bijection with 5-regular plane trees satisfying a simple local constraint at inner edges. It yields explicit expressions for the generating functions of rooted 5c-triangulations, and of rooted 5-connected planar triangulations with root-vertex degree~5, these belonging to the same algebraic extension as the generating function of rooted 5-connected planar triangulations computed by Gao, Wanless and Wormald. The bijection also makes it possible to obtain efficient uniform random generation and succinct encoding procedures for 5-connected planar triangulations.
\end{abstract}

\maketitle

\section{Introduction}\label{sec:intro}

A rich literature is devoted to the enumeration of (rooted) planar maps, initiated by the pioneering work of Tutte~\cite{tutte1962census,tutte1963census}. The original method proceeded by showing that the generating function ---with an additional catalytic variable--- satisfies a functional equation, obtained typically by translating a decomposition by root deletion, and then solving the functional equation either by guessing-checking, or by the quadratic method~\cite{Tutte73} and its extensions~\cite{Bousquet-MelouJ06,eynard2016counting}. Another fruitful approach is to relate families of maps via substitution schemes~\cite{tutte1963census,mullin1968enumeration,banderier2001random,gao2001,BouttierG14}, allowing to extract expressions for the generating functions of map families with higher connectivity/girth constraints (whereas catalytic equations for such families are often more difficult to obtain).   
Various families of planar maps can be enumerated 
by these methods, 
 yielding surprisingly simple counting formulas.  
Other map enumeration approaches have been developed, in particular matrix integral techniques~\cite{di19952d}, and closer to our focus, bijective constructions that provide direct combinatorial proofs of the counting formulas, starting with the Cori-Vauquelin bijection~\cite{cori1981planar}, and further developed by Schaeffer~\cite{Schaeffer:these} and by Bouttier, Di Francesco and Guitter~\cite{bouttier2002census,BouttierFG04}. Bijective methods for planar maps have gained a high level of generality over the years, and one can essentially distinguish three main methods:
\begin{enumerate}
\item[(i):] Bijections that are obtained by showing that each map from the considered family can be endowed with a canonical orientation (or, more generally, a canonical weighted biorientation), typically with properties of minimality (no clockwise cycle), accessibility 
(from any vertex one can reach the root-face or root-vertex), and specification of vertex outdegrees and face degrees;  
one can then specialize a general bijection (keeping track of vertex outdegrees and face degrees) that encodes a minimal accessible orientation by a decorated plane tree. Two different such general bijections were provided respectively in~\cite{Bernardi-Fusy:dangulations} and in~\cite{albenque2013generic}. These gave a unified setting encompassing most bijections that were known for planar map families,  
 and making it possible to obtain extensions such as the enumeration of planar maps of fixed girth with control on the face-degrees~\cite{OB-EF:girth}.    
\item[(ii):] Slice decompositions~\cite{bouttier2012planar}, where the considered maps are cut along certain geodesic paths, yielding a decomposition into slice-maps, and  slice-maps are themselves recursively decomposed, again by cutting along certain geodesic paths. As (i) this method is well suited for the enumeration of maps with control on the girth and face-degrees~\cite{BouttierG14,bouttier2024}, with extension to the so-called irreducible setting.      
\item[(iii):] A recent bijective construction~\cite{duchi2025} turns (via certain rewiring operations) the description trees given by a generic  order-one catalytic equation into trees that are specified by a context-free grammar. This establishes a bridge from Tutte's method to a bijective encoding of maps by  trees that are easy to count.   
\end{enumerate}

In this paper we consider the class of so-called \emph{5c-triangulations}, i.e., triangulations of the 5-gon with no separating triangle nor separating quadrangle\footnote{In a planar map a cycle is called \emph{separating} if there is at least one vertex inside and at least one vertex outside (the underlying graph is thus disconnected by deletion of the cycle).}. These have been recently considered in~\cite[Sec.12]{OB-EF-SL:Grand-Schnyder},\cite{BernardiFL23}, where it is proved that these maps are characterized by the existence of Schnyder-type combinatorial structures that admit several incarnations: as  corner labelings, as certain woods on 5 trees, or as outdegree-constrained orientations on the primal-dual map (in~\cite{BernardiFL23} it is also shown that the wood incarnation yields a face-counting crossing-free straight-line drawing algorithm). These maps are also closely related to 5-connected planar triangulations, which are known to be the triangulations with no separating triangle nor separating quadrangle. The enumeration of rooted 5-connected planar triangulations
has been solved by Gao, Wanless and Wormald~\cite{gao2001} via a bivariate application of the substitution method, yielding an explicit algebraic expression for the generating function. No bijection by the above mentioned methods was known for 5-connected planar triangulations, whereas methods (i) and (ii) are known to apply~\cite{Fu07b,BouttierG14} to triangulations of the 4-gon with no separating triangle ---called irreducible--- which are closely related to 4-connected planar triangulations.  

Our main contribution is to exhibit a simple family of trees that are in bijection to 5c-triangulations. 
A \emph{plane tree} is an unrooted tree embedded in the plane. The non-leaf vertices are called \emph{nodes}, the edges incident to a leaf are called \emph{legs}, while the other ones are called \emph{inner edges}. A \emph{$k$-regular} plane tree is a plane tree such that all nodes have degree $k$. It is called \emph{leg-balanced} if for every inner edge, one of its two half-edges is followed by a leg in clockwise order around the incident node, while the other half-edge is followed by an inner edge. The following is our main result (see Figure~\ref{fig:example_main_bijection}): 

\fig{width=12cm}{example_main_bijection}{Left: a 5c-triangulation. Right: the corresponding leg-balanced 5-regular plane tree.}

\begin{theo}\label{theo:main}
There is an explicit bijection $\phi$ between 5c-triangulations with $n\!+\!5$ vertices
and leg-balanced 5-regular plane trees with $n$ nodes, for $n\geq 1$.
\end{theo}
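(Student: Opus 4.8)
The plan is to build the bijection $\phi$ via a canonical orientation on 5c-triangulations, in the spirit of method (i) from the introduction, using the Schnyder-type structures of \cite{BernardiFL23,OB-EF-SL:Grand-Schnyder}. First I would invoke the existence of a canonical combinatorial structure on every 5c-triangulation: namely, a suitable minimal (no clockwise cycle) and accessible orientation of the primal-dual incidence map with prescribed outdegrees, which exists and is unique by the cited results. The target count is suggestive: a leg-balanced $5$-regular plane tree with $n$ nodes has $5n-2(n-1)=3n+2$ half-edges that are legs, hence $3n+2$ leaves, and $n-1$ inner edges; the Euler-type bookkeeping should match the $n+5$ vertices, the $5$ outer edges, and the triangular inner faces of a 5c-triangulation. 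So the first real step is to set up the precise parameter dictionary (nodes $\leftrightarrow$ inner vertices or inner faces, legs $\leftrightarrow$ boundary incidences, the leg-balanced constraint $\leftrightarrow$ the local outdegree condition) so that both sides are counted by the same statistics.

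Next I would describe $\phi$ concretely. Given a 5c-triangulation $T$, equip it with its canonical orientation; then apply the generic tree-encoding bijection of \cite{Bernardi-Fusy:dangulations} or \cite{albenque2013generic}, which turns a minimal accessible orientation with fixed outdegrees into a decorated plane tree by a local closure/opening operation at each vertex or face. The main work is to show that for the specific outdegree vector dictated by 5c-triangulations, the decorated tree produced is exactly a $5$-regular plane tree, and that the decoration (the pattern of legs versus inner edges around each node) is precisely the leg-balanced condition: around each node the half-edges alternate in the prescribed way because each inner vertex/face contributes a fixed local configuration forced by the orientation's outdegree constraints. This is where the ``$5$'' and the balanced alternation should drop out of a careful case analysis of the local rule.

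For the inverse map I would start from a leg-balanced $5$-regular plane tree $\tau$ with $n$ nodes and run the matching closure operation: legs are closed against each other following the contour, producing the inner triangular faces, while the $5$ ``unmatched'' legs become the outer $5$-gon. I would verify that (a) the closure is well-defined (the leg-balanced condition guarantees the local closure rule never gets stuck, i.e., the number of legs available at each step is correct — this is the analogue of a ballot/Łukasiewicz condition), (b) the resulting map is a triangulation of the $5$-gon, and (c) it has no separating triangle or quadrangle, the last point being the most delicate: I expect one has to argue that a separating $3$- or $4$-cycle in the closure would force a forbidden local pattern in $\tau$ (e.g. a node violating leg-balancedness, or an inner edge with the wrong neighbor), using minimality/accessibility of the orientation read off from $\tau$. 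Finally, $\phi$ and its inverse are mutually inverse because closure and opening are inverse operations in the generic bijection, and the canonical orientation is recovered from $\tau$ uniquely.

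The main obstacle, I expect, is precisely the ``no separating triangle nor quadrangle'' closure condition: showing that leg-balancedness is not just necessary but sufficient to characterize the trees whose closure lands in the 5c class. Necessity is a local check on the canonical orientation; sufficiency requires ruling out every separating short cycle, which amounts to proving that any such cycle in the closed map, pulled back through the closure, contradicts either the regularity or the balanced alternation of $\tau$. Handling this cleanly — perhaps by an induction on the number of inner edges, peeling off a leaf-node of $\tau$ and tracking how the 5c property is preserved under the corresponding local expansion of the triangulation — is the crux of the argument.
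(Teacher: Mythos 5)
Your high-level plan (canonical orientation, then the master bijection of~\cite{Bernardi-Fusy:dangulations}, then a closure for the inverse) is indeed the scheme the paper follows, but the proposal leaves out the step that is actually the technical heart of the proof, and the step you identify as the crux is handled by a different and much more direct argument. First, the construction does not run on the triangulation itself: one passes to the angular map $Q=\iota(M)$, a quadrangular dissection of the $10$-gon in which the black (face-)vertices must have degree~$3$, and one shows that $Q$ carries a unique tree-biorientation with outdegree $5$ at inner white vertices and $2$ at inner black vertices exactly when $M$ is a 5c-triangulation. The obstruction you do not address is that the master bijection only records \emph{outdegrees} of inner vertices, while here one must also force the black vertices to have total degree $3$, i.e.\ control indegrees --- something the generic parameter correspondence simply does not give. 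The paper's answer is the pair of ``leg-index'' lemmas (partial control on which sectors of the closure are empty), which translate the degree-$3$ condition into the local tree conditions C1--C2 on a bicolored tree with white nodes of degree $5$ and black nodes of degree $2$; only after a reduction step do these trees become the leg-balanced $5$-regular trees of the statement. Your assertion that the leg-balanced condition ``drops out of a careful case analysis of the local rule'' is exactly the point that needs (and in the paper receives) a genuine argument; as stated it is a gap, not a proof.

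Second, for the inverse direction you propose to rule out separating $3$- and $4$-cycles by pulling such a cycle back through the closure, or by induction peeling a leaf of the tree; neither is how the paper proceeds, and it is unclear the peeling induction can be made to work, since removing a node of the tree does not correspond to a local modification of the closed map preserving 5c-ness. The paper instead proves a characterization lemma: a quadrangulation in $\cG$ admits a $(5,2)$-outdegree tree-biorientation iff it is the angular map of a 5c-triangulation, the ``only if'' part being a short Euler-relation count of outdegrees versus available edges inside a hypothetical separating $4$-cycle (and the ``if'' part an explicit construction from the minimal regular orientation and its left co-accessibility tree, which also gives uniqueness). Since the closure of any admissible tree automatically produces such a biorientation, 5c-ness of the image follows at once from this lemma, with no cycle-chasing through the closure. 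So: right skeleton, but the two load-bearing ingredients --- the degree control on black vertices via leg-indices, and the orientation-based characterization of 5c maps with its counting argument --- are missing from your proposal.
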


Let us note that bijective constructions are already known to relate $k$-connected triangulations to (sub-)families of $k$-regular plane trees for $k\in\{3,4\}$. Precisely, 3-connected (i.e., simple) plane triangulations with $n+3$ vertices are in bijection with $3$-regular plane trees ---with nodes properly bicolored--- having $n$ white nodes and such that all leaves are adjacent to black nodes~\cite{FuPoScL,Bernardi-Fusy:dangulations}. And triangular dissections of the 4-gon with no separating triangle (called irreducible triangulations of the 4-gon, these are closely related to 4-connected triangulations) having $n+4$ vertices are in bijection with $4$-regular plane trees having $n$ nodes~\cite{Fu07b}. 
The bijection of Theorem~\ref{theo:main} thus extends this pattern to $k=5$.    

\medskip

\noindent{\bf Outline.} Our bijection $\phi$ follows the scheme (i) and is close to the bijection of~\cite{FuPoScL} for irreducible quadrangulations of the hexagon, which relies 
 on a certain biorientation of the inner edges, called here a \emph{tree-biorientation}.  Precisely it is shown in~\cite{FuPoScL} that irreducible quadrangulations of the hexagon are characterized by the existence of a (unique) tree-biorientation with all inner vertices of outdegree $3$.    
We give in Section~\ref{sec:gen_bij} an easy bijection $\Phi$   
between tree-biorientations on quadrangular dissections and bicolored plane trees; the construction consists (as in~\cite{FuPoScL}) in deleting the ingoing half-edges and the outer vertices/edges, while the inverse mapping $\Psi$ relies on certain closure operations, each one  creating a quadrangular inner face.  
We then show in Section~\ref{sec:5c_biori} that the angular maps of 5c-triangulations ---certain quadrangulations of the 10-gon--- are characterized by the existence, and uniqueness, of a tree-biorientation with outdegree~$5$ at inner white vertices and outdegree $2$ at inner black vertices (these tree-biorientations are closely related to the 5c-orientations of~\cite{BernardiFL23}, see Remark~\ref{rk:transf}). It then remains to characterize the corresponding plane trees. A difficulty here compared to~\cite{FuPoScL} is that we not only need to control vertex outdegrees (which is easily tracked by the mapping $\Phi$), but also some vertex degrees, precisely 
the fact that black vertices have degree 3. 
We show in Section~\ref{sec:special} ---using lemmas established in Section~\ref{sec:partial}---  that there is sufficient partial control on black vertex degrees via $\Phi$ to track this property, and characterize the  corresponding trees by local properties only: upon simplication of the tree-shape, these trees identify to 5-regular leg-balanced plane trees. 
In Section~\ref{sec:counting} we obtain from the bijection $\phi$ an expression for the generating function of rooted 5c-triangulations (and its derivative). We can then also express the generating function of rooted 5-connected triangulations with root-vertex degree $5$, since the two map families  are related by a simple vertex-shelling extraction process. By singularity analysis we also obtain asymptotic estimates for the counting coefficients of both families.
Finally, in Section~\ref{sec:algo} the bijection $\phi$ is applied to the random generation and entropy encoding of 5-connected triangulations. We conclude with open questions on bijective extensions, in particular regarding more general counting results that can be obtained from the substitution approach of~\cite{gao2001}.

\section{Definitions on maps and orientations}
A \emph{plane map} (shortly hereafter, a map) is a connected multigraph embedded in the plane without edge-crossings, considered up to continuous deformation\footnote{For connected multigraphs embedded on the sphere the terminology of planar maps is commonly used; upon projection a plane map is equivalent to a planar map with a distinguished face.}. The unbounded face is called \emph{outer face}, while the other faces are called \emph{inner faces}. A \emph{corner} is the angular sector between two consecutive half-edges around a vertex. The \emph{degree} of a vertex or face is its number of incident corners. The \emph{outer degree} is the degree of the outer face. 
Edges, vertices, and corners are called \emph{outer} or \emph{inner} whether they are incident to the outer face or not; an \emph{inner half-edge} is an half-edge on an inner edge. A map is \emph{rooted} by marking a corner, called the \emph{root}, that is incident to the outer face.
A \emph{dissection} is a plane map whose outer face contour is a simple cycle. A triangular (resp. quadrangular) dissection is a dissection whose inner faces have degree $3$ (resp. degree $4$). Note that a quadrangular dissection has even outer degree and is bipartite (since all faces have even degree), hence the vertices can be colored black or white so that adjacent vertices have different colors; a dissection has two such colorings, which differ by a color switch.  
A quadrangular dissection is \emph{bicolored} if it is endowed with such a vertex bicoloration. 

A \emph{biorientation} of a dissection is an orientation of the inner half-edges. The inner edges with $i\in\{0,1,2\}$ half-edges outgoing are called $i$-way. In case all inner edges are 1-way, the bi-orientation is simply called an orientation. The \emph{outdegree} (resp. \emph{indegree}) of a vertex is the number of incident outgoing (resp. incoming) half-edges. 
A biorientation is called \emph{admissible} if every outer vertex has outdegree $0$. In a biorientation a \emph{directed path} from vertex $v$ to vertex $v'$ is a path $v=v_0,e_0,v_1,\ldots,e_k,v_{k+1}=v'$ such that for $0\leq i\leq k$ the edge $e_i$ is either a 2-way edge connecting $v_i$ and $v_{i+1}$, or a 1-way edge from $v_i$ to $v_{i+1}$. It is called \emph{simple} if all vertices along the path are distinct, except possibly $v=v'$ in which case it is called a simple directed closed path.    
 A \emph{clockwise circuit} is a simple directed closed path with the interior region on its right. A biorientation is called \emph{minimal} if it is admissible and has no clockwise circuit. 
 It is called \emph{accessible} if from any inner vertex $v$ there is a directed path from $v$ to some outer vertex.  
 A \emph{tree-biorientation} of a dissection $M$ is a minimal bi-orientation of $M$ whose 2-way edges form a tree spanning the inner vertices. Note that a tree-biorientation is accessible, and moreover for every 1-way edge $e$ connecting two inner vertices, the unique cycle $C_e$ formed by $e$ and 2-way edges is such that $e$ has the interior of $C_e$ on its left.

\section{Quadrangular tree-biorientations and bicolored plane trees}
\subsection{The general bijection}\label{sec:gen_bij}
For $d\geq 1$ we let $\cQ_d$ be the family of tree-biorientations on bicolored quadrangular dissections of outer degree $2d$. We define a \emph{bicolored plane tree} as a plane tree with the following conditions:
\begin{itemize}
\item
Some leaves are marked, their incident edge is called a \emph{leg},
\item
All other vertices are called \emph{nodes}, an edge connecting two nodes is called an \emph{inner edge},
\item
Nodes are colored black or white, such that adjacent nodes are of different colors.
\end{itemize}
The \emph{excess} of a bicolored plane tree is the number of legs minus the number of inner edges.  
For $d\geq 1$, let $\cT_d$
 be the family of bicolored plane trees of excess $d$.   
For $Y\in\cQ_d$, with $Q$ the underlying bicolored quadrangular dissection, let $T=\Phi(Y)$ be the bicolored plane tree obtained by deleting the outer vertices and outer edges of $Q$, and all incoming half-edges, so that the 1-way edges become the legs of the obtained tree, see the left side of Figure~\ref{fig:Phi} for an example. By the Euler relation, $Q$ has $2n+d-2$ inner edges, among which $n-1$ are 2-way and $n+d-1$ are 1-way in $Y$. In the obtained tree $T$, the inner edges (resp. legs) correspond to the 2-way (resp. 1-way) edges of $Y$, hence $T\in\cT_d$.  

\fig{width=14cm}{Phi_minus}{Left: a biorientation $O\in\cO_4$. Right: the corresponding bimobile $T=\Phi_-(O)$.}

\fig{width=14cm}{Phi}{Top-left: a tree-biorientation $Y\in\cQ_3$. Bottom-left: the corresponding bicolored plane tree $T=\Phi(Y)\in\cT_3$, which can also be obtained (right column)  from the bimobile $\hat{T}=\Phi_-(Y)$ upon deleting the 3 legs at every square vertex, and turning the unique incident inner edge into a leg.}

\begin{prop}\label{prop:Phi}
For $d\geq 1$, the mapping $\Phi$ is a bijection from $\cQ_d$ to $\cT_d$. 
For $Y\in\cQ_d$, with $Q$ the underlying bicolored quadrangular dissection, every black (resp. white) inner vertex $v$ of $Q$ corresponds to a black (resp. white) node in $T=\Phi(Y)$, and the outdegree of $v$ corresponds to the degree of the node.  
\end{prop}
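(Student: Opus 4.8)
The plan is to prove that $\Phi$ is a bijection by exhibiting an explicit inverse $\Psi:\cT_d\to\cQ_d$ via a sequence of \emph{closure operations}, each of which absorbs legs to create one new quadrangular inner face, and to check that $\Psi\circ\Phi$ and $\Phi\circ\Psi$ are identities. The correspondence of colors and of outdegree-with-degree will then be essentially immediate from the definition of $\Phi$ (deleting incoming half-edges and outer structure): a black inner vertex of $Q$ keeps its color, and its outgoing half-edges are exactly the half-edges surviving in $T$, so its outdegree in $Y$ equals its degree as a node of $T$; the content of the proposition is really the bijectivity.

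First I would set up the closure. Starting from $T\in\cT_d$, draw the tree in the plane and read the legs in clockwise order around the outer face; the excess being $d$ means there are exactly $d$ more legs than inner edges, which is the quantity that must become the semiperimeter $2d$ of the outer face after closure. A \emph{local closure} step takes a leg $\ell$ followed (in clockwise order along the contour walk) by a pattern \emph{leg, edge-side, leg} at distance matching a quadrangle — concretely, as in~\cite{FuPoScL}, one scans the contour for the first occurrence of two consecutive legs whose tips can be joined through two further contour edges to bound a face of degree $4$, then identifies the two legs' endpoints, orienting the resulting merged edge so that the new face is a quadrangle and the 1-way orientation points as required by a tree-biorientation. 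I would prove, by an excess/induction argument, that such a local pattern always exists until only $2d$ unmatched legs remain, and that performing all local closures yields a bicolored quadrangular dissection of outer degree $2d$, with the 2-way edges exactly the former inner edges of $T$ (hence spanning the inner vertices and forming a tree) and the 1-way edges oriented so that minimality (no clockwise circuit) holds. That last point is the usual subtlety: one argues that each local closure encloses a region to the \emph{left} of the new 1-way edge relative to the cycle it forms with 2-way edges, which is exactly the condition recorded at the end of Section~2, and that composing such steps cannot create a clockwise circuit because any putative clockwise circuit would have to use a 1-way edge with its enclosed region on the wrong side.

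Then I would verify the two compositions. For $\Psi\circ\Phi=\mathrm{id}$: given $Y\in\cQ_d$, the tree $T=\Phi(Y)$ records all 2-way edges as inner edges and all outgoing half-edges of 1-way edges as legs; I would show that the closure algorithm applied to $T$ reconstructs the faces of $Q$ in the reverse order to how they would be "opened", using minimality of $Y$ to guarantee the scan always finds the right local pattern — i.e. the innermost enclosed face with respect to the partial order on faces induced by the tree-biorientation is always available as the next quadrangle to form. For $\Phi\circ\Psi=\mathrm{id}$: this is direct, since $\Phi$ simply erases exactly what $\Psi$ added (incoming half-edges and the closed-up outer boundary), returning the original tree. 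Finally the Euler-relation bookkeeping in the paragraph preceding the proposition already pins down that $|\text{inner edges of }Q|=2n+d-2$ split as $n-1$ two-way and $n+d-1$ one-way, matching $n-1$ inner edges and $n+d-1$ legs of $T\in\cT_d$, which confirms $\Phi$ lands in $\cT_d$ and the inverse lands back in $\cQ_d$ with the right parameters.

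The main obstacle, as in the prototype~\cite{FuPoScL}, will be showing that the closure operation is \emph{well-defined and terminating} on every $T\in\cT_d$ and produces a \emph{minimal} tree-biorientation — in particular that (a) a local quadrangular-closure pattern always exists as long as more than $2d$ legs remain, which requires an invariant combining the excess with the planar contour structure, and (b) the resulting biorientation has no clockwise circuit. I expect (b) to follow cleanly from the left-enclosure property preserved at each step, but (a) requires a careful contour-scanning argument; everything else (color preservation, the outdegree-equals-degree statement, and the count of edge types) is bookkeeping that follows from the definition of $\Phi$ and the Euler relation already invoked in the excerpt.
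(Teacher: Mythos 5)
Your strategy---building an explicit inverse by closure operations and checking both compositions---is legitimate in principle (it is how the inverse $\psi$ is described, in a remark, via the pattern $1000$), but it is not the route the paper takes, and as written your argument has genuine gaps exactly at the points you yourself flag. The paper proves the proposition by \emph{specializing} the master bijection $\Phi_-$ of~\cite{Bernardi-Fusy:dangulations}, already known to be a bijection from $\cO_{2d}$ (minimal accessible biorientations on dissections of outer degree $2d$) to bimobiles, together with its parameter correspondences (inner-vertex outdegree $\leftrightarrow$ round-node degree, inner-face degree $\leftrightarrow$ square-node degree, 2-way edges $\leftrightarrow$ round-round edges). The only work then is to identify the image of $\cQ_d$: since the 2-way edges span the inner vertices and bimobiles have no square-square edge, every square vertex has degree $4$ with exactly one round neighbour and three legs, so it contracts to a marked leaf, giving $T=\Phi(Y)\in\cT_d$; conversely any $T\in\cT_d$ expands uniquely to such a bimobile, and the $\Phi_-$-preimage is checked to be a tree-biorientation on a bicolored quadrangular dissection. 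All the hard statements (existence and termination of closures, minimality, the inverse property) are thereby outsourced to the cited result.

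In your plan those statements are precisely what remains unproved: (a) that a local closure pattern is always available until the final stage, (b) that the resulting biorientation is minimal (no clockwise circuit), and (c) that $\Psi\circ\Phi=\mathrm{id}$ via the ``innermost face'' reconstruction---each is announced (``I would prove'', ``I expect'') but not carried out, and these constitute essentially the whole content of the bijection. Moreover your description of the local closure is not correct: it does not join ``two consecutive legs through two further contour edges''; it matches \emph{one} leg with the three edge-sides that follow it along the contour (pattern $1000$, the tour taken with the outer face on the walker's right), closing a quadrilateral in which the former leg becomes the unique 1-way edge with the face on its left. Also, after local closures terminate one must still create an enclosing $2d$-gon with alternating colors and perform \emph{final} closures matching the remaining legs to its vertices; it is not the case that exactly $2d$ unmatched legs remain and bound the outer face on their own. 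So either import the master bijection as the paper does (then only the square-vertex reduction/expansion needs verification), or genuinely supply the contour-scanning and minimality lemmas---in which case you are re-proving the results of~\cite{Bernardi-Fusy:dangulations,FuPoScL} in this special case rather than giving a complete proof as it stands.
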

\begin{proof}
We first recall the general master bijection $\Phi_-$ of~\cite{Bernardi-Fusy:dangulations}  for minimal accessible biorientations, illustrated in Figure~\ref{fig:Phi_minus}, and then will explain how it specializes to tree-biorientations.  
For $d\geq 1$, let $\cO_d$ be the family of dissections of outer degree $d$ endowed with a minimal accessible biorientation with no 0-way edge. 
We define a bimobile as a plane tree with the following conditions:
\begin{itemize}
\item
Some leaves are marked, their incident edge is called a \emph{leg},
\item
All other vertices are called \emph{nodes}, an edge connecting two nodes is called an \emph{inner edge},
\item
Nodes are of two types, round or square, such that every leg is incident to a square node,
and there is no square-square inner edge. 
\end{itemize}
The \emph{excess} of a bimobile is the number of legs, minus the number of round-square inner edges, minus twice the number of round-round inner edges. 
For $d\geq 1$ let $\cB_d$ be the family of bimobiles of 
 excess $d$. For $O\in\cO_d$, whose vertices are considered round, we let $T=\Phi_-(O)$ be obtained by the following operations:
\begin{itemize}
\item
insert a square vertex $s_f$ in each inner face $f$;
\item
for each half-edge $h=(v,e)$ of $O$ such that the opposite half-edge is outgoing, with $f$ the face on the left of $h$, insert a leg from $s_f$ pointing (but not reaching) toward $v$;
\item
for each half-edge $h=(v,e)$ on an outer edge and having an inner face $f$ on its left, insert a leg from $s_f$ pointing (but not reaching) toward $v$;
\item
for each 1-way edge $e=(u,v)$, with $f$ the face on the left of $e$, insert an edge connecting $s_f$ to $u$;
\item
delete the outer vertices and all edges of $O$ apart from the 2-way edges. 
\end{itemize}
It is shown in~\cite{Bernardi-Fusy:dangulations} that for each $d\geq 1$ the mapping $\Phi_-$ is a bijection from $\cO_d$ to $\cB_d$. Several parameter correspondences hold. In particular:
\begin{itemize}
    \item
    every round vertex of degree $k$ in $T$ corresponds to an inner vertex of outdegree $k$ in $O$,
    \item
 every square vertex of degree $k$ in $T$ corresponds to an inner face of degree $k$ in $O$,
\item
  every round-round edge of $T$  corresponds to a 2-way edge of $O$, with the same extremities in $T$ as in $O$.   
\end{itemize}

\medskip

We now consider the specialization of $\Phi_-$ to $\cQ_d$. For $Y\in\cQ_d$, 
with $Q$ the underlying bicolored quadrangular dissection, let $\hat{T}=\Phi_-(Y)$. 
We endow the round nodes of $\hat{T}$ with the bicoloration inherited from the vertex bicoloration of $Q$; note that two adjacent nodes of $\hat{T}$ must have different
 colors (since the corresponding vertices are also adjacent in $Q$). Moreover, since the round-round edges form a subtree of $\hat{T}$ spanning all round vertices, and since there are no square-square edges in $\hat{T}$, the square vertices of $\hat{T}$ have a single round neighbour, plus 3 incident legs (since the corresponding inner face of $Q$ has   degree $4$). The \emph{reduction} of $\hat{T}$ is the bicolored plane tree $T$ obtained by replacing every square vertex of $\hat{T}$ by a marked leaf, so that the incident inner edge becomes a leg. The fact that $\hat{T}$ has excess $d$ transfers to the fact that $T\in\cT_d$. 
 Moreover it is easy to see that $T=\Phi(Y)$, as illustrated in Figure~\ref{fig:Phi}.  Conversely, for $T\in \cT_d$, we may expand every marked leaf into a square vertex with 3 attached legs, yielding a bimobile $\hat{T}$ of excess $d$. Letting $Y\in\cO_d$ be such that 
 $\hat{T}=\Phi_-(Y)$, and $Q$ the underlying dissection, we clearly have that $Y$ is a tree-biorientation of $Q$, that all inner faces of $Q$ have degree $4$, and that the vertex bicoloration inherited from $T$ (and extended to the outer vertices) is a proper bicoloration of the vertices of $Q$, since the 2-way edges form a tree spanning the inner vertices of $Q$. 
\end{proof}

\begin{remark}\label{rk:diss_hexagon}
As a specialization of Proposition~\ref{prop:Phi}, it is shown in~\cite{FuPoScL} that quadrangular dissections of the hexagon can be endowed with a tree-biorientation where all inner vertices have outdegree $3$ iff the dissection is irreducible, i.e., every 4-cycle bounds a face. Moreover, in that case such a tree-biorientation is unique. 
Thus Proposition~\ref{prop:Phi} gives a bijection between such dissections and unrooted bicolored binary trees, which is the bijection of~\cite{FuPoScL}. \hfill$\triangle$
\end{remark}

\fig{width=12cm}{bijection_psi}{The closure mapping $\psi$, from a bicolored plane tree $T\in\cT_3$ to a tree-oriented quadrangular dissection $Y\in\cQ_3$.}

\begin{remark}
The inverse $\psi$ of $\phi$ has an explicit description (illustrated in Figure~\ref{fig:bijection_psi}) that is induced by the mapping 
$\Psi_-=(\Phi_-)^{-1}$ described in~\cite{Bernardi-Fusy:dangulations}, and is the natural extension of the closure mapping described in~\cite{FuPoScL} for  bicolored binary trees.   
For $T\in\cT_d$, we consider a counterclockwise tour around $T$ (i.e., with the outer face on our right), yielding a cyclic word with $1$'s and $0$'s, where we write $1$ when passing along a marked leaf (i.e., a leaf incident to a leg), and write $0$ when passing along an edge. 
Whenever we see the pattern $1000$, meaning a leg followed by three edges, we perform a so-called \emph{local closure} operation, which consists in extending the end of the leg to reach the end of the third edge, so as to close a quadrangular face. The edge resulting from the local closure is not anymore considered as a leg, and accordingly the contour word for the outer face is updated by replacing the pattern $1000$ by~$0$. Local closure operations are performed until having no pattern $1000$ in the contour word. At that point we create an enclosing $2d$-gon, and perform final closure operations that match the ends of the remaining legs with vertices of the $2d$-gon so as to form quadrangular inner faces only. The resulting quadrangular dissection of outer degree $2d$ is naturally endowed with a tree-orientation, where the edges originating from $T$ are 2-way, while the edges originating from legs are  1-way, outward of the original incident node. 
The image of $T$ by $\psi$ is the obtained tree-oriented quadrangular dissection $Y$ (note that by construction we have $T=\phi(Y)$).      \hfill$\triangle$
\end{remark}

\subsection{Two lemmas giving partial control on black vertex degrees}\label{sec:partial}
The parameter-correspondence in Proposition~\ref{prop:Phi} only allows control on the outdegrees of inner vertices, not on their indegrees. As a first remark, there is a situation where we have a simple control on black vertex indegrees: if all legs are at black nodes of the bicolored tree, then the black vertices have indegree $0$ (because all 1-way edges are directed from black to white). For instance, in the bijection of Remark~\ref{rk:diss_hexagon}, if we specialize to the case where all legs are at black vertices, then we get a bijection to quadrangular irreducible dissections of the hexagon such that all black inner (resp. outer) vertices have degree $3$ (resp. $2$), which are themselves in easy bijection to simple planar triangulations. This specialization is given in~\cite{FuPoScL} and is the bijection for simple triangulations (also obtained in~\cite{Bernardi-Fusy:dangulations}) that is mentioned at the end of the introduction. 

However, in order to apply Proposition~\ref{prop:Phi} to the tree-biorientations for  5c-triangulations (to be described in Section~\ref{sec:5c_biori}), a difficulty is that we will need control on black vertex degrees (to be all equal to $3$) while having legs at white nodes in the associated bicolored plane trees. 
This control will be made possible by the two lemmas given below. In a bicolored plane tree $T$ a \emph{node-corner} (resp. \emph{leaf-corner}) is a corner incident to a node (resp. a leaf). The \emph{leaf-index} of a node-corner $c$ is the number of node-corners (excluding $c$) between $c$ and the next leaf-corner in a clockwise walk around the tree (with the outer face on the left of the walker). In $Q=\Psi(T)$ a node-corner $c$ corresponds to what we call a \emph{sector}, i.e., an interval of corners around an inner vertex, delimited by two outgoing half-edges, and such that the non-extremal half-edges in the sector are incoming (thus an inner vertex of outdegree $k$ has $k$ sectors). We say that the sector is empty if it has no incoming half-edge, i.e., it is a corner between two outgoing half-edges around an inner vertex.  

\begin{lemma}\label{lem:legindex1}
Let $T\in\cT_d$ and  let $Q=\Psi(T)$. For $c$ a node-corner of $T$, if $c$ has leg-index smaller than $3$, then the corresponding sector in $Q$ is empty. 
\end{lemma}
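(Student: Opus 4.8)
The plan is to phrase everything in terms of the counterclockwise contour word $w$ of $T$ (the tour with the outer face on the right) underlying the closure construction $\Psi$: it is a cyclic word on $\{0,1\}$ with a $1$ for each leg and two $0$'s for each inner edge, one per side. Along this tour the node-corners of $T$ and the symbols of $w$ alternate, the leaf-corners being tucked inside the $1$-symbols; so node-corners of $T$ are in bijection with positions of $w$, say by sending $c$ to the symbol $s(c)$ of $w$ traversed just before it. Since a clockwise tour is the reversed tour, unwinding the definition gives that the leaf-index of $c$ equals the length of the maximal run of $0$'s of $w$ ending at $s(c)$; in particular $c$ has leaf-index $<3$ iff at least one of $s(c)$ and its two cyclic predecessors in $w$ is a $1$. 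As node-corners of $T$ correspond to sectors of $Q$, it therefore suffices to prove the contrapositive: \emph{if the sector of $Q$ attached to $c$ is non-empty, then $s(c)$ and its two predecessors in $w$ are all $0$}.

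Next I set up bookkeeping for the closure process, which rewrites the pattern $1000\to 0$ (local closures) until none remains and then glues the surviving legs onto a $2d$-gon. In this last step incoming half-edges appear only at outer vertices, so every incoming half-edge at an inner vertex of $Q$ is the head of an edge created by a local closure. For each position $p$ of the current word I record its \emph{span}, the factor of $w$ it descends from: spans form a consecutive partition of $w$, and the position $0_{\mathrm{new}}$ produced by a local closure $1\,0_a\,0_b\,0_c\mapsto 0_{\mathrm{new}}$ gets span $[\mathrm{leg}]\,\mathrm{span}(0_a)\,\mathrm{span}(0_b)\,\mathrm{span}(0_c)$. Since local closures only ever create $0$'s, every $1$ is an original leg, so this span is a factor of $w$ that itself reduces to $0$ under $1000\to 0$ and has length $\geq 4$. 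A short induction on the reverse reduction (each step replaces a $0$ by $1000$) shows that any word on $\{0,1\}$ of length $\geq 4$ reducing to $0$ ends with $000$; hence the span of any $0$-position ends with $0$, and if its length is $\geq 4$ it ends with $000$.

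Now assume the sector $\sigma$ attached to $c$, around the inner vertex $v$, is non-empty. Realizing $Q$ via the closure process, pick among the local closures the first one that drops a half-edge into $\sigma$, with pattern $1\,0_a\,0_b\,0_c$, whose leg's new half-edge lands at $v$ right after the edge-side $0_c$ is traversed into $v$. Two points. First, $0_c$ cannot be a closure edge: a closure edge keeps a fixed orientation and its contour side is always traversed from tail to head, so a closure edge traversed into $v$ would be incoming at $v$ and would already occupy $\sigma$, contradicting that this is the first closure into $\sigma$; hence $0_c$ is a side of an inner edge of $T$, so $\mathrm{span}(0_c)$ is a single $0$. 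Second, $0_c$ is then outgoing at $v$, and since the freshly inserted half-edge is adjacent to it with no half-edge in between — again because this is the first closure into $\sigma$ — it must be the $T$-half-edge delimiting $\sigma$ on that side; thus $s(c)=0_c$, a $0$. Finally read $w$ backwards from $c$: the first symbol is the single $0$ of $\mathrm{span}(0_c)$; the second is the last symbol of $\mathrm{span}(0_b)$, a $0$; the third is either the second-to-last symbol of $\mathrm{span}(0_b)$ when this span is long (hence a $0$, since it then ends with $000$) or else the last symbol of $\mathrm{span}(0_a)$, again a $0$. So $s(c)$ and its two predecessors in $w$ are all $0$, which is the desired contrapositive.

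The step I expect to be the main obstacle is the second point of the third paragraph: correctly pinpointing which node-corner of $T$ (equivalently, which sector of $Q$) receives the landed half-edge. One must exclude that it falls inside an already partly filled sector, or next to a half-edge coming from a leg (hence from a closure edge) rather than from a tree edge; both are ruled out by passing to the \emph{first} local closure into $\sigma$ together with the rigidity of the contour orientation of closure edges. Once this is in place, the span bookkeeping and the ``$000$-suffix'' sublemma make the remaining verifications routine.
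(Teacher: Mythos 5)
Your proof is correct, but it takes a genuinely different (and considerably heavier) route than the paper's. The paper argues directly: if $c$ has leg-index $k<3$, take the next leg $\ell$ after $c$ in clockwise order (i.e.\ the leg preceding $c$ on the counterclockwise contour, with only $k\le 2$ edge-sides in between); when $\ell$ is eventually closed, the quadrangular face it creates must pass over $c$, so $c$ survives as a corner of $Q$ inside that face and the sector is empty --- a two-sentence argument with no bookkeeping of intermediate contour words. You instead prove the contrapositive: you locate the first local closure whose head falls into the sector of $c$, rule out that the third consumed edge-side is a closure edge (via the tail-to-head traversal of closure edges, which is the right rigidity argument), identify the receiving sector as the one whose corner immediately follows that tree edge-side (so $s(c)$ is an original $0$), and then use the span decomposition plus the ``words of length $\ge 4$ reducing to $0$ end with $000$'' sublemma to force two more $0$'s before $c$. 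I checked the delicate points you flag --- the landing sector identification, the adjacency of the new head to a $T$-half-edge with no sector boundary in between, and the fact that incoming half-edges at inner vertices arise only from local closures --- and they all hold, so the argument is complete. What your version buys is a sharper structural fact (a closure head can only land in a sector whose corner is preceded by three $0$'s of the original contour word), which is close in spirit to what is needed for the companion Lemma~\ref{lem:legindex2}; what it costs is the span machinery and the reduction sublemma, which the paper's direct ``the closure of the nearest leg covers $c$'' argument avoids entirely.
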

\begin{proof}
Let $\ell$ be the next leg encountered after $c$ in a clockwise walk around $T$. Since the leg-index of $c$ is smaller than $3$, then when $\ell$ is closed it has to pass over $c$, thus $c$ is a corner in $Q$, in the inner face resulting from the closure of $\ell$. 
\end{proof}

\begin{lemma}\label{lem:legindex2}
Let $T\in\cT_d$ be such that there is no pair of legs that are consecutive around a black node, and let $Y=\Psi(T)$, with $Q$ the underlying dissection. For $c$ a corner at a black node of $T$, if $c$ has leg-index at least $3$ then the corresponding sector in $Q$ is not empty. Moreover, every outer black vertex has positive indegree. 
\end{lemma}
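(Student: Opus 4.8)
The plan is to use the explicit closure description of $\Psi$ (from the preceding remark) to track what happens around a black node during the sequence of local closures and the final closure. Recall that a sector at a black inner vertex of $Q$ is non-empty precisely when at least one incoming half-edge lands in that sector. Since $Q$ is quadrangular and the $2$-way edges form a spanning tree of the inner vertices, the $1$-way edges around a black vertex are all oriented outward (black-to-white), so a sector at a black node is bounded by two outgoing half-edges, each of which is either a $2$-way (tree) edge or a $1$-way edge coming from a former leg. The content of the sector consists of incoming half-edges, which can only be $1$-way edges that were closed onto the black vertex (i.e.\ legs, coming from elsewhere, whose closure reached this black vertex). So the claim is: if $c$ is a corner at a black node $v$ with leg-index $\geq 3$, then during the closure process some leg gets closed with its tip landing inside the region of $c$.

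First I would set up notation: let $c$ be the given corner at black node $v$, and walk clockwise around $T$ starting just after $c$; by hypothesis the leg-index is $\geq 3$, so the first three corners following $c$ in this walk are node-corners, and only after that (at the earliest) do we meet a leaf-corner. The key point is a ``dual'' statement to Lemma~\ref{lem:legindex1}: whereas a small leg-index forces $c$ to be passed over by the next leg (hence made a corner, with an empty sector), a leg-index $\geq 3$ means the next leg $\ell$ after $c$ does \emph{not} pass over $c$ when it closes — instead its closure consumes three node-corners strictly between $c$ and $\ell$, producing a quadrangular face that does not have $v$ on its boundary at $c$. I would then argue that the corner $c$ survives this first local closure, and I would iterate: at each stage, look at the current contour word; the corner at $v$ corresponding to $c$ is still present, it is still followed (reading clockwise) by at least three $0$'s before the next $1$ — because local closures to its ``left'' only replace a $1000$ pattern by a single $0$, which cannot decrease this count below $3$ — so $c$ is never itself erased by a local closure. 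Hence $c$ persists, as a node-corner at $v$, until all local closures are exhausted and we pass to the final closure stage.

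At the final closure stage, the contour word has no $1000$ pattern, so between any two consecutive legs (reading clockwise) there are at most two edges. We create the enclosing $2d$-gon and match the remaining legs to its vertices so that every new inner face is quadrangular. The corner $c$ at $v$ is still a node-corner with at least three edges before the next leg $\ell$; I would show that the final-closure face incident to $c$ must be completed using an edge of the $2d$-gon or a previously-placed leg whose tip lands strictly inside $c$'s region — more precisely, that the quadrangular face containing $c$ on its boundary is bounded by an incoming half-edge at $v$ inside the sector $c$, which is exactly the statement that the sector is non-empty. The ``moreover'' part (every outer black vertex has positive indegree) should then follow similarly: in the final closure, each vertex of the $2d$-gon is where leg-tips are attached, and the alternation of colors together with the no-consecutive-legs-at-black-nodes hypothesis forces at least one leg to be closed onto each outer black vertex; alternatively, since outer vertices have outdegree $0$ by admissibility and an isolated outer vertex is impossible in a dissection of outer degree $2d\geq 6$ with $n\geq 1$ inner node, connectivity of $Q$ forces a positive indegree there.

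The main obstacle I expect is the bookkeeping in the middle step: showing rigorously that $c$ is never destroyed and that the ``at least three edges before the next leg'' invariant is preserved under local closures happening elsewhere in the contour word. One has to be careful that a local closure occurring \emph{before} $c$ in the clockwise reading (i.e.\ involving the leg $\ell$ that immediately follows $c$, or legs further along) genuinely replaces a $1000$ block by a single $0$ and does not, through some interaction, merge away the $0$'s that sit between $c$ and the next remaining leg. Making this invariant precise — probably by phrasing it as a statement about the contour word together with a pointer at the fixed corner $c$, and checking it is maintained by every type of closure step — is where the real work lies; once that invariant is in hand, both conclusions of the lemma fall out from the structure of the final closure.
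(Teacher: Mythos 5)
Your plan takes a genuinely different, dynamic route from the paper (the paper argues statically on $Q=\Psi(T)$: if the sector at $c$ were empty, then $c$ would be a corner of a quadrangular inner face $v_1,v_2,v_3,v_4$, and a short case analysis of that face's orientations forces $\{v_3,v_4\}$ to be the unique 1-way edge with the face on its left and $\{v_2,v_3\}$ to be 1-way out of $v_3$, i.e.\ two consecutive legs at the black node $v_3$, contradicting the hypothesis). Unfortunately your dynamic argument has a genuine gap: the invariant you rely on --- that the number of edges between $c$ and the relevant neighbouring leg never drops below $3$ because local closures only replace a $1000$ pattern by a single $0$ --- is false. When that nearest leg performs a local closure, it consumes the three edges adjacent to it (on the far side from $c$); the four symbols merge into one $0$, and the new count becomes (old count $-3$) plus $1$ plus the number of edges beyond that leg before the following leg, which can be $2$ or less (e.g.\ old count $4$ with the next leg immediately preceded by another leg at a \emph{white} node, a configuration the hypothesis does not exclude). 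In fact the invariant cannot persist at all: the local-closure phase terminates exactly when no leg is followed by three edges, so at that point every corner has at most two edges before the next leg; hence if no leg ever lands at $c$, its current leg-index must eventually fall below $3$ and $c$ gets passed over. Relatedly, your argument for the main statement never invokes the hypothesis that no two legs are consecutive around a black node, yet the statement is false without it (it is precisely what the paper's face analysis contradicts), so no argument ignoring it can close; and the final-closure step, where your sketch would have to produce the incoming half-edge inside the sector of $c$, is only asserted, not proved.

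The ``moreover'' part also has a flaw: your fallback argument (outdegree $0$ by admissibility, hence an outer black vertex of indegree $0$ would be isolated, contradicting connectivity) does not work, because only inner half-edges are oriented in a biorientation; an outer black vertex with no incident inner edge still carries its two outer edges, so nothing is disconnected. The paper instead considers the unique inner face incident to such a vertex, observes that its opposite black vertex $b'$ must be inner (an inner edge joining two outer vertices would have to be $0$-way, which is excluded), and concludes that the two inner edges at $b'$ along this face are consecutive outgoing 1-way edges, i.e.\ two consecutive legs at a black node of $T$ --- once again the no-consecutive-legs hypothesis is what does the work, and your sketch would need to make its first suggested argument precise along these lines.
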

\begin{proof}
Assume the sector corresponding to $c$ is empty. Then $c$ is a corner of $Q$. Let $f$ be the inner face of $Q$ containing $c$. Let $v_1,v_2,v_3,v_4$ be the four vertices   
in counterclockwise order around $f$, with $v_1$ the vertex at $c$. Note that the edge $\{v_1,v_2\}$ is a 2-way edge since $c$ has non-zero leg-index, and $\{v_1,v_4\}$ is either 2-way or 1-way from $v_1$ to $v_4$. Hence, the unique 1-way edge having $f$ on its left is either $\{v_2,v_3\}$ or $\{v_3,v_4\}$. It can not be $\{v_2,v_3\}$, as this would imply that the leg-index of $c$ is equal to $1$. Hence it has to be $\{v_3,v_4\}$ (directed from $v_3$ to $v_4$). In that case $(v_2,v_3)$ can not be 2-way, as this would imply that the leg-index of $c$ is equal to $2$. Hence $\{v_2,v_3\}$ is 1-way, from $v_3$ to $v_2$. But then we have the contradiction that $v_3$ has two consecutive incident edges that are outgoing 1-way, and these give a pair of legs that are consecutive at the black node corresponding to $v_3$ in $T$. 

Regarding the second statement, assume that there is a black outer vertex $b$ with no incident incoming edge, i.e., no incident inner edge. Let $f$ be the unique inner face incident to $b$. Let $b,w,b',w'$ be the vertices in counterclockwise order around $f$. The edges $\{b,w\}$ and $\{b,w'\}$ are outer edges, so that $w,w'$ are outer vertices. Note that $Q$ can not have an inner edge connecting two outer vertices; indeed such an edge would have to be incoming at both extremities, which is not possible since we forbid 0-way edges in tree-biorientations. This implies that $b'$ is an inner vertex. 
Thus the two inner edges $\{b',w\}$ and $\{b',w'\}$ are 1-way out of $b'$, and they are consecutive around $b'$. We get the contradiction that these two edges yield two consecutive legs at a black node of $T$.  
\end{proof}

\section{Application to 5c-triangulations}
\subsection{Tree-biorientations for 5c-triangulations}\label{sec:5c_biori}
For $M$ a triangular dissection (whose vertices are colored white), one obtains a quadrangular dissection $Q=\iota(M)$, called the \emph{angular map} of $M$, by the following classical procedure: a black vertex is inserted inside each inner face $f$, and connected by 3 new edges to the white vertices around $f$; then the edges of $M$ are deleted. For $Q$ to be a dissection (i.e., have simple outer contour) requires the mild assumption that $M$ has no inner face incident to two outer edges of $M$. We let $\cF$ be the family of triangular dissections of the 5-gon with no inner face incident to two outer edges, and let $\cG$ be the family of quadrangular dissections of outer degree $10$ with all black vertices of degree $3$; note that $\iota$ is a bijection between $\cF$ and $\cG$. Let $\cFfc\subset\cF$ be the family of 5c-triangulations, and let $\cGfc\subset\cG$ be the image of $\cFfc$ under $\iota$.

For $Q\in\cG$, a tree-biorientation of $Q$ is called a \emph{5c-biorientation} if every inner white vertex has outdegree $5$ and every inner black vertex has outdegree $2$. 

\fig{width=\linewidth}{ori_5c_bis}{(a) The 5c-triangulation $M$ of Figure~\ref{fig:example_main_bijection}, (b) superimposed with $Q=\iota(M)$; (c) $Q$ is endowed with its  minimal $b_5$-rooted regular orientation $X_0$, with the left co-accessibility spanning tree $\tau_0$ colored red; (d) making all edges of $\tau_0$ 2-way and reversing the inner edge incident to $b_5$ yields the 5c-biorientation $Y_0$ of $Q$;
 (e) the tree $T=\Phi(Y_0)$; (f) the leg-balanced 5-regular plane tree $\tau=\phi(M)$ is obtained as the reduction of $T$.}

\begin{lemma}\label{lem:5cbiori}
Let $Q\in\cG$. Then $Q$ admits a 5c-biorientation iff $Q\in\cGfc$. 
In that case $Q$ has a unique 5c-biorientation.  
\end{lemma}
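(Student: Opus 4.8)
The plan is to transfer the known orientation characterization of 5c-triangulations (\cite{BernardiFL23}, see also \cite{OB-EF-SL:Grand-Schnyder}) to the language of tree-biorientations, using the explicit surgery depicted in Figure~\ref{fig:ori_5c_bis}(c)--(d). I first set up the ``1-way'' side. Fix an outer black vertex $b_5$ of $Q$; recall that, since $Q\in\cG$, this vertex has degree $3$, hence exactly one incident inner edge. Call a \emph{$b_5$-rooted regular orientation} an orientation of the inner edges of $Q$ (all $1$-way) such that every inner white vertex has outdegree $5$, every inner black vertex has outdegree $1$, the unique inner edge at $b_5$ is outgoing from $b_5$, every other outer vertex has outdegree $0$, and every inner vertex can reach $b_5$ by a directed path. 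Two facts feed the argument: by the theory of $\alpha$-orientations, if the set of such orientations is nonempty it is a distributive lattice, so it has a unique minimal element $X_0$ (no clockwise circuit, in the sign convention attached to the rooting at $b_5$); and by the characterization of 5c-triangulations through outdegree-constrained orientations of the angular/primal-dual map \cite{BernardiFL23}, transported to our setting via Remark~\ref{rk:transf}, this set is nonempty exactly when $M=\iota^{-1}(Q)$ is a 5c-triangulation, i.e.\ exactly when $Q\in\cGfc$.

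For the implication that $Q\in\cGfc$ forces $Q$ to admit a 5c-biorientation, I perform the surgery $X_0\mapsto Y_0$ of Figure~\ref{fig:ori_5c_bis}(c)--(d). From $X_0$ one extracts its left co-accessibility spanning tree $\tau_0$ of the inner vertices of $Q$ (for each inner vertex, the first edge of its leftmost directed path to $b_5$); a standard argument shows this is indeed a spanning tree. One then makes every edge of $\tau_0$ a $2$-way edge and reverses the inner edge at $b_5$, obtaining $Y_0$. It remains to check that $Y_0$ is a 5c-biorientation: all outer vertices now have outdegree $0$; the $2$-way edges are exactly $\tau_0$, hence span the inner vertices; a short bookkeeping using that inner black vertices have degree $3$ shows that the outdegrees become $5$ at inner white vertices and $2$ at inner black vertices; and $Y_0$ has no clockwise circuit, inherited from the minimality of $X_0$ (turning the leftmost co-accessibility tree into $2$-way edges cannot create a clockwise circuit). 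Thus $Y_0$ is a 5c-biorientation of $Q$.

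For uniqueness and the converse implication, I would show the surgery is reversible on the whole class of 5c-biorientations. Given any 5c-biorientation $Y$ of $Q$, its $2$-way edges form a spanning tree $\tau$ of the inner vertices; using the tree-biorientation structure (each $1$-way inner edge $e$ has the interior of its fundamental cycle $C_e$ on its left) one orients the edges of $\tau$ canonically towards $b_5$ and un-reverses the inner edge at $b_5$, obtaining an orientation $X$ of the inner edges of $Q$ that one checks to be a $b_5$-rooted regular orientation, with left co-accessibility tree $\tau$, and such that applying the surgery to $X$ returns $Y$. The crucial point is that minimality of $Y$ forces $X=X_0$: any clockwise circuit of $X$ strictly above $X_0$ in the $\alpha$-orientation lattice persists --- after rerouting through $2$-way edges where needed --- as a clockwise circuit of $Y$, contradicting minimality. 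Hence $Y=Y_0$, which proves uniqueness; and since the existence of $Y$ entails the existence of a $b_5$-rooted regular orientation, it entails $Q\in\cGfc$.

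I expect the last step to be the main obstacle: one must verify that the ``no clockwise circuit'' condition of a tree-biorientation simultaneously pins down the spanning tree of $2$-way edges and the $1$-way orientation of the remaining edges, ruling out both a non-minimal $X$ and a different choice of $2$-way spanning tree. This is a careful tracking of directed cycles across the surgery, keeping account of the side of its fundamental cycle on which each $1$-way edge lies, and is where I expect most of the work to go; by contrast the outdegree bookkeeping and the $\alpha$-orientation input are routine. As a self-contained alternative for the direction ``$Q$ has a 5c-biorientation $\Rightarrow Q\in\cGfc$'', one may argue directly that a separating triangle or quadrangle of $M$ lifts to a separating $6$- or $8$-cycle of $Q$ bounding a sub-dissection, and that summing the prescribed outdegrees ($5$ at inner white vertices, $2$ at inner black vertices, $0$ along the boundary of the sub-dissection) against the Euler relation of that sub-dissection produces a contradiction.
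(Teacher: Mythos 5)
Your plan follows the same overall strategy as the paper (minimal $b_5$-rooted regular orientation, left co-accessibility tree, surgery making the tree 2-way; a counting argument as the self-contained route for the converse), but as written it has concrete defects. First, your definition of a ``$b_5$-rooted regular orientation'' is inconsistent: with $n$ inner white vertices, $Q$ has $2n-2$ inner black vertices and $6n-1$ inner edges, so prescribing outdegree $5$ at inner white vertices (rather than $4$, as in the paper) makes the total outdegree $5n+(2n-2)+1=7n-1>6n-1$; moreover requiring the unique inner edge at $b_5$ to be \emph{outgoing} from $b_5$ while demanding that every inner vertex can \emph{reach} $b_5$ is impossible, since then no edge enters $b_5$. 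The paper's convention is accessibility \emph{from} $b_5$, with the co-accessibility tree oriented \emph{away} from $b_5$; correspondingly, in your uniqueness step, orienting the 2-way tree $\tau$ ``towards $b_5$'' does not decrease each inner outdegree by exactly one (a vertex loses one unit per child), so the reverse surgery as you describe it does not produce a regular orientation. These are fixable slips, but the objects you manipulate do not exist as defined.

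The more substantial gaps are the two minimality statements, which are exactly the content of the lemma and are left unproven. (a) That turning the left co-accessibility tree into 2-way edges ``cannot create a clockwise circuit'' is asserted parenthetically; the paper derives it from \cite{Bernardi07}: a biorientation admitting a left co-accessibility tree (with 2-way edges read as counterclockwise 2-cycles) is minimal. (b) The uniqueness crux you explicitly defer. Note that the paper closes uniqueness with no circuit tracking at all: any 5c-biorientation $Y_1$ yields, by orienting its 2-way tree away from $b_5$, a $b_5$-rooted regular orientation $X_1$ for which that tree is a left co-accessibility tree (the defining property of tree-biorientations --- each 1-way inner edge has the interior of its fundamental cycle on its left --- is precisely ``all external edges counterclockwise''); existence of a left tree forces $X_1$ minimal, hence $X_1=X_0$, and uniqueness of the left co-accessibility tree of $X_0$ forces the trees to coincide. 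Your circuit-persistence idea could be completed (every directed path of $X$ remains directed in $Y$ since the surgery only adds traversability, and no circuit passes through $b_5$), but even then you must invoke uniqueness of the left co-accessibility tree to rule out a different 2-way spanning tree, which your sketch does not supply. Finally, your primary route to ``a 5c-biorientation exists $\Rightarrow Q\in\cGfc$'' leans on an iff transported via Remark~\ref{rk:transf}, which the paper deliberately avoids (the remark notes this transfer needs extra lemmas, e.g.\ accessibility of 5c-orientations); your fallback counting sketch is indeed the paper's argument, but the correct count sums outdegrees of vertices strictly inside the lifted 8-cycle ($5n+2(2n-2)=9n-4$ against at most $(6n-2)+(3n-3)$ available outgoing half-edges), with no constraint of ``outdegree $0$ along the boundary'' as you state.
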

\begin{proof}
Let $Q\in\cGfc$ and let $M$ be the 5c-triangulation such that $Q=\iota(M)$. We now describe the process to compute the 5c-biorientation of $Q$, illustrated in Figure~\ref{fig:ori_5c_bis}(c)-(d).   
A \emph{regular orientation} of $Q$ is an orientation of the inner edges such that all outer vertices have outdegree~$0$ except one, of outdegree $1$ and called the \emph{root-vertex},  and the inner white (resp. black) vertices have outdegree $4$ (resp. $1$). As shown in~\cite[Sec.12]{OB-EF-SL:Grand-Schnyder} (see also Section A.3 and Figure~13(b)-(c) in~\cite{BernardiFL23}),  the fact that $M$ has no separating 3-cycle implies that $Q$ can be endowed  with a regular orientation $X$, and furthermore the fact that $M$ has no separating 4-cycle ensures that any regular orientation of $M$ is \emph{co-accessible} with respect to its root-vertex~$v$, meaning that for every vertex $w$ there is a directed path from $v$ to $w$. 
Note that if $v\neq b_5$, we can reverse a directed path from $v$ to $b_5$ and obtain a regular orientation rooted at $b_5$. According to~\cite{Felsner:lattice}, one can then consider the minimal $b_5$-rooted regular orientation $X_0$ of $M$. A \emph{co-accessiblity tree} is then defined as a subtree $\tau$ of $Q$ spanning the vertex-set $V$ made of $b_5$ and the inner vertices of $Q$, and such that the orientation on $\tau$ given by $X_0$ is the orientation away from the root $b_5$. In that case, an \emph{external edge} is an inner edge $e$ of $Q$ not in $\tau$ and connecting two vertices of $V$; letting $\gamma_e$ be the unique cycle formed by $e$ and edges of $\tau$, the external edge $e$ is called \emph{counterclockwise} (resp. clockwise) if it has the interior of $\gamma_e$ on its left (resp. on its right). 
It follows from~\cite{Bernardi07} that there is a unique co-accessibility tree $\tau_0$ such that every external edge is counterclockwise; it is called the \emph{left co-accessibility tree}.  If we make every edge of $\tau_0$ 2-way, then we obtain a biorientation $Y_0'$ of $Q$ such that the 2-way edges form a tree spanning $V$. Note that every inner vertex has clearly increased its outdegree by $1$, so that white (resp. black) inner vertices
 have outdegree $5$ (resp. $2$). Moreover, the bi-orientation $Y_0'$ is minimal; 
 indeed $\tau_0$ is a left co-accessibility tree of $Y_0'$, upon interpreting every 2-way edge as a double edge forming a counterclockwise cycle; and as shown in~\cite{Bernardi07} the existence of a left co-accessibility tree guarantees minimality (no. clockwise cycle). Upon turning the 2-way edge incident to $b_5$ into a 1-way edge directed toward $b_5$, we thus obtain a 5c-biorientation $Y_0$ of $Q$.  
  
To prove uniqueness, assume for contradiction that $Q$ has a 5c-biorientation $Y_1\neq Y_0$. 
Let $\tau_1$ be the tree formed by the 2-way edges and the inner edge incident to $b_5$. Note that $\tau_1\neq\tau_0$ (otherwise one would have $Y_1=Y_0$). If we turn the edges in $\tau_1$ into 1-way edges directed away from the root $b_5$, then we obtain a regular orientation $X_1$ of $Q$ rooted at $b_5$, and for which $\tau_1$ is a left co-accessibility tree. By existence of a left co-accessibility tree, the orientation $X_1$ has to be minimal, hence it is the minimal $b_5$-rooted regular orientation of $Q$ (hence $X_0=X_1$), for which $\tau_1$ is the unique left co-accessibility tree. We reach the contradiction that $\tau_1=\tau_0$. 

Finally, let $Q\in\cG\backslash\cGfc$ and assume that $Q$ can be endowed with a 5c-biorientation $X$. Let $M$ be the triangulation of the 5-gon such that $Q=\iota(M)$.  
Since $Q\notin\cGfc$, there is a separating 4-cycle $\gamma=(w_1,w_2,w_3,w_4)$ in $M$. Let $f_i$ be the inner face of $M$ inside $\gamma$ and incident to the edge $\{w_i,w_{i+1}\}$, and let $b_i$ be the corresponding black vertex of $Q$. 
Consider the 8-cycle $\gamma_Q=(w_1,b_1,\ldots,w_4,b_4)$ of $Q$. Inside $\gamma_Q$, let $n\geq 1$ be the number of white vertices, so that (by the Euler relation) there are $2n-2$ black vertices, and $6n-2$ edges of $Q$. 
Let $S$ be the total outdegree of vertices inside $\gamma_Q$. The outdegree condition gives $S=5n+2(2n-2)=9n-4$. On the other hand, note that $S$ is at most the number of edges inside $\gamma_Q$ plus the number $k$ of 2-way edges with both extremities inside $\gamma_Q$. Since the 2-way edges form a tree, we have $k\leq 3n-3$. Hence, $S\leq 9n-5$, giving a  contradiction.     
\end{proof}

\begin{remark}\label{rk:transf}
It would also be possible, though a bit less direct, to obtain the 5c-biorientation of $Q=\iota(M)$ from the minimal 5c-orientation~\cite{BernardiFL23} on the primal-dual completion of $M$, via transfer rules similar to those used in~\cite{FuPoScL}. To prove that the transferred biorientation is minimal, one would have to resort to an analogue of Lemma~8.13 in~\cite{FuPoScL}, for which one would have to show that 5c-orientations are accessible. \hfill$\triangle$
\end{remark}

\subsection{Specialization of $\Phi$ to 5c-biorientations}\label{sec:special}

Let $\wcQfc$ be the subfamily of $\cQ_{5}$ such that all inner white vertices have outdegree $5$ (resp. $2$). Let $\cQfc$ be the subfamily of $\wcQfc$ where all black vertices (including the outer ones) have degree $3$. Note that the family $\cQfc$ is in bijection with   5c-triangulations, by Lemma~\ref{lem:5cbiori} combined with the bijection $\iota$. Let $\wcTfc$ be the subfamily of $\cT_{5}$ such that all white (resp. black) nodes have degree $5$ (resp. $2$).  Note that there are two types of black nodes in $T\in\wcTfc$: those incident to one inner edge and one leg (type I), and those incident to two inner edges (type II).  The fact that the excess is $5$ translates to the fact that there are as many black nodes of both types. 
Let $\cTfc$ be the subfamily of $\wcTfc$ where the following two conditions hold:
\begin{itemize}
\item[{\bf C1}:]
For each black node $b$ of type I, let $w$ be the adjacent white node, and $e$ the next edge after $(w,b)$ in clockwise order around $w$; then $e$ leads to a black node of type II.  
\item[{\bf C2}:]
For each black node $b$ of type II, let $w,w'$ be the adjacent white nodes, $e$ the next edge after $(w,b)$ in clockwise order around $w$, and $e'$ the next edge after $(w',b)$ in clockwise order around $w'$; then exactly one of $(e,e')$ leads to a black node of type II. 
\end{itemize} 

\begin{prop}
The mapping $\Phi$ gives a bijection between $\wcQfc$ and $\wcTfc$, which specializes into a bijection between $\cQfc$ and $\cTfc$.
\end{prop}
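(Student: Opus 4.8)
The plan is to derive this proposition as a straightforward corollary of Proposition~\ref{prop:Phi}, using the parameter correspondences it provides together with the two lemmas of Section~\ref{sec:partial}. First I would observe that Proposition~\ref{prop:Phi} already gives a bijection $\Phi:\cQ_5\to\cT_5$ in which the color of an inner vertex of $Q$ matches the color of the corresponding node of $T$, and the outdegree of the vertex equals the degree of the node. Hence the constraint ``white vertices have outdegree $5$, black vertices have outdegree $2$'' defining $\wcQfc$ corresponds exactly, under $\Phi$, to the constraint ``white nodes have degree $5$, black nodes have degree $2$'' defining $\wcTfc$. This gives the first bijection $\Phi:\wcQfc\to\wcTfc$ immediately. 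I would also record the dichotomy on black nodes: in $T\in\wcTfc$ a black node of degree $2$ is incident either to one inner edge and one leg (type I) or to two inner edges (type II), and since $\Phi$ sends legs to $1$-way edges and inner edges to $2$-way edges, a type-I black node corresponds to a black vertex with outdegree $2$ split as (one $1$-way out, one $2$-way), i.e.\ indegree equal to $1$ plus the number of incoming half-edges, while a type-II black node corresponds to a black vertex whose two outgoing half-edges are both on $2$-way edges.

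The heart of the argument is to show that, within $\wcQfc$ and $\wcTfc$, the subfamily $\cQfc$ (all black vertices, inner and outer, have degree $3$) corresponds under $\Phi$ precisely to the subfamily $\cTfc$ (conditions {\bf C1} and {\bf C2}). I would phrase this as: for $Y\in\wcQfc$ with underlying dissection $Q$, and $T=\Phi(Y)$, one has $Y\in\cQfc$ if and only if $T$ satisfies {\bf C1} and {\bf C2}. The key reformulation is that a black vertex $v$ of outdegree $2$ has total degree $3$ iff it has exactly one incoming half-edge, i.e.\ exactly one of its two sectors is empty and the other contains exactly one incoming half-edge; and an outer black vertex (which has outdegree $0$) has degree $3$ iff it has exactly one incoming half-edge. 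So I would translate ``degree $3$ at all black vertices'' into ``each inner black vertex has exactly one non-empty sector carrying exactly one incoming half-edge, and each outer black vertex has exactly one incoming half-edge.'' Then I would apply Lemmas~\ref{lem:legindex1} and~\ref{lem:legindex2}: a node-corner of leaf-index $<3$ gives an empty sector, and (under the no-two-consecutive-legs-at-a-black-node hypothesis, which I must check holds automatically in $\wcTfc$ — indeed a black node of degree $2$ with two legs would be type neither I nor II, so this hypothesis is vacuously satisfied) a corner at a black node with leaf-index $\geq 3$ gives a non-empty sector, and every outer black vertex has positive indegree.

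With these tools the verification becomes a local count at each black node. For a type-I black node $b$ with white neighbour $w$: the two sectors of the corresponding black vertex are separated by its one leg and its one $2$-way edge; the relevant node-corner of $T$ at $b$ is the one ``between'' the inner edge $(b,w)$ and the leg, and I need its leaf-index to be such that exactly one incoming half-edge lands in the $2$-way-side sector. Tracing the clockwise walk from that corner, the edge $e$ following $(w,b)$ around $w$ (in the notation of {\bf C1}) is exactly what controls whether the next leg is encountered after $0$, $1$, or $\ge 2$ intervening node-corners; condition {\bf C1}, demanding that $e$ leads to a type-II black node, is precisely the statement that forces leaf-index exactly $3$ there, hence (via Lemma~\ref{lem:legindex2}, non-empty) exactly one incoming half-edge, giving degree $3$ at $b$. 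For a type-II black node $b$ with white neighbours $w,w'$: both of its half-edges are $2$-way, so both sectors border legs; the two relevant node-corners sit along $e$ and $e'$ as in {\bf C2}; condition {\bf C2} — exactly one of $e,e'$ leads to a type-II black node — says exactly one of the two sectors is empty (leaf-index $<3$, Lemma~\ref{lem:legindex1}) and the other has leaf-index exactly $3$ (Lemma~\ref{lem:legindex2}, hence exactly one incoming half-edge), again giving degree $3$. The outer black vertices are handled by the second statement of Lemma~\ref{lem:legindex2} together with an excess/counting argument showing the total indegree available forces each to have indegree exactly $1$.

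I expect the main obstacle to be the bookkeeping in the previous paragraph: pinning down exactly which node-corner of $T$ corresponds to which sector of the black vertex $v$, and checking that ``leaf-index exactly $3$'' is the precise threshold (not $\leq 3$ or $\geq 3$ alone) that yields indegree exactly $1$ rather than $0$ or $\geq 2$ — this requires combining both lemmas in the right direction and being careful that the upper bound on indegree comes from the global constraint that the $2$-way edges form a spanning tree (as in the counting argument at the end of the proof of Lemma~\ref{lem:5cbiori}), so that no sector can absorb ``too many'' incoming half-edges. Once the equivalence ``degree $3$ at all black vertices $\iff$ {\bf C1} and {\bf C2}'' is established, the specialization statement follows by restricting the bijection $\Phi:\wcQfc\to\wcTfc$. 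I would also double-check that the hypothesis of Lemma~\ref{lem:legindex2} (no two consecutive legs at a black node) is indeed automatic in $\wcTfc$, so that the lemma applies to every $T\in\wcTfc$ without further assumption; this is immediate since a black node has degree $2$.
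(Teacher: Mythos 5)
Your overall route is the same as the paper's (the first statement via the parameter correspondence of Proposition~\ref{prop:Phi}, the second via Lemmas~\ref{lem:legindex1} and~\ref{lem:legindex2}, whose hypothesis you correctly note is automatic in $\wcTfc$), but the forward implication ``{\bf C1} and {\bf C2} $\Rightarrow Y=\Psi(T)\in\cQfc$'' has a genuine gap exactly where you flag ``the main obstacle''. The two lemmas only give qualitative information: a sector is empty or non-empty. From {\bf C1}/{\bf C2} you can therefore conclude at best that every inner black vertex has one empty sector and one non-empty sector (degree at least $3$) and that every outer black vertex has indegree at least $1$. Your claim that the conditions force ``leaf-index exactly $3$'' and that this threshold yields ``exactly one incoming half-edge'' is not correct and cannot be made local: {\bf C1} (resp.\ {\bf C2}) only forces the relevant corner to have leg-index \emph{at least} $3$ (the clockwise walk may pass several further node-corners before meeting a leg), and no statement of the form ``leaf-index $=3$ implies indegree contribution $1$'' is available --- the number of incoming half-edges absorbed by a non-empty sector is determined by the global closure, not by the leaf-index. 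You gesture at an unspecified ``excess/counting argument'' and at the spanning-tree count from the proof of Lemma~\ref{lem:5cbiori}, but you never carry it out, and this is precisely the missing step. The paper closes it with a short global count: if $T\in\wcTfc$ has $n$ white nodes it has $2n-2$ black nodes, so $Y$ has $2n+3$ black vertices and, by the Euler relation, $6n+9$ edges; since the dissection is bipartite each edge is incident to exactly one black vertex, so the black degrees sum to $6n+9=3(2n+3)$, and ``degree at least $3$ everywhere'' forces ``degree exactly $3$ everywhere''. Without this (or an equivalent) argument the forward direction is not established.

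For the converse direction your local viewpoint does work, and is in fact a little more direct than the paper's: if {\bf C1} fails at a type-I node both its corners have leg-index $<3$, hence both sectors are empty and the vertex has degree $2$ (Lemma~\ref{lem:legindex1}); if {\bf C2} fails at a type-II node then either both corners have leg-index $<3$ (degree $2$, Lemma~\ref{lem:legindex1}) or both have leg-index $\geq 3$ (degree $\geq 4$, Lemma~\ref{lem:legindex2}), and in all cases $Y\notin\cQfc$. The paper instead treats the {\bf C2}-failure case by a pigeonhole count over the sectors of the reduced tree $\btau$, so as to exhibit a black vertex of degree $2$ using Lemma~\ref{lem:legindex1} only. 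This alternative does not, however, repair the forward direction, where the global counting step above must be supplied.
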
 
\begin{proof}
The first statement is a direct consequence of the paramater correspondence in Proposition~\ref{prop:Phi}. 
Let $T\in \wcTfc$ with $n$ white nodes.
Assume that $T\in\cTfc$. Then by definition every black node must have an incident corner of leg-index at least $3$. Hence, by Lemma~\ref{lem:legindex2}, all black nodes (including the outer ones) have degree at least $3$ in $Y=\Psi(T)$. Since $T$ has $2n-2$ black nodes ($n-1$ of each type) and $n$ white nodes, its image $Y$ has $2n+3$ black vertices (including the outer ones), and a total of $3n+8$ vertices. By the Euler relation it has $6n+9$ edges. 
Hence all black vertices must have degree $3$, so that $Y\in\cQfc$. 

Assume now that  $T\notin \wcTfc$ and let us check that its image $Y=\Psi(T)$ is not in $\cQfc$. If there is at least one black node $b$ of type I not satisfying Condition~C1, then the corner after the unique incident leg in clockwise order around $b$ has leg-index smaller than $3$. The other incident corner (preceding the leg) clearly has leg-index $0$. Hence, by Lemma~\ref{lem:legindex1} $b$ corresponds to a black vertex of degree $2$ in $Y$, so that $Y\notin \cQfc$. 
Otherwise all black nodes of type I satisfy C1, so that there is at least one black node of type II that does not satisfy C2. 
Let $\tau$ be obtained from $T$ by deleting every black node $b$, turning the two incident edges into a single edge (an inner edge if $b$ is of type II, a leg if $b$ is of type I). And then let $\btau$ be obtained from $\tau$ by deleting all legs (and incident leaves). Each  corner of $\btau$ corresponds to a sector of $T$ that either carries at least one edge (leg or inner edge connected to a black node of type I) or is empty. Note also that inner edges incident to black nodes of type I are in different sectors; otherwise there would be a black node of type I not satisfying~C1. 
Since there are $n-1$ black nodes of type I, the number of non-empty sectors is at least $n-1$. For each edge $e$ of $\btau$, let $h,h'$ be the two half-edges of $e$, and let $c$ (resp. $c'$) be the corner of $\btau$ following $h$ (resp. $h'$) in clockwise order around the incident node. Let $m_e\in\{0,1,2\}$ be the number of corners among $c,c'$ whose corresponding sector is not empty. 
Each edge $e$ of $\btau$ corresponds to a black node $b_e$ of type II in $T$. 
Moreover Condition~C2 on $b_e$ is equivalent to having $m_e=1$. 
Hence, there is an edge $\epsilon$ of $\btau$ such that $m_\epsilon\neq 1$. 
Since the sum of $m_e$ over all $n-1$ edges $e$ of $\btau$ is at least $n-1$, it implies that there is an edge $\epsilon'$ of $\btau$ such that $m_{\epsilon'}=2$. In that case, in $T$ the two corners at $b_{\epsilon'}$ have leg-index smaller than $3$. Hence, by Lemma~\ref{lem:legindex1} the corresponding black vertex of $Y$ has degree $2$, so that $Y\notin\cQfc$. 
\end{proof}

To obtain Theorem~\ref{theo:main} it remains to observe that $\cTfc$ is in easy bijection with the family of leg-balanced 5-regular plane trees. As considered in the above proof, 
the \emph{reduction} of $T\in\cTfc$ is the tree $\tau$ obtained from $T$  
by deleting every black node, turning the two incident edges into a single edge (an inner edge of $\tau$ if the white node has type II, a leg if it has type I),
 see  Figure~\ref{fig:ori_5c_bis}(e)-(f).   

\begin{lemma}
The reduction procedure gives a bijection between trees in $\cTfc$ having $n$ white nodes,  and leg-balanced 5-regular plane trees having $n$ nodes, for $n\geq 1$. 
\end{lemma}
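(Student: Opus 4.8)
The plan is to show that the reduction map $T \mapsto \tau$ is a well-defined bijection from $\cTfc$ (restricted to trees with $n$ white nodes) onto leg-balanced $5$-regular plane trees with $n$ nodes, by exhibiting an explicit inverse. First I would verify that $\tau$ is $5$-regular: each white node of $T$ has degree $5$, and the reduction does not change white-node degrees (it only splices out the degree-$2$ black nodes, merging their two incident edges into one), so every node of $\tau$ has degree $5$. The nodes of $\tau$ are exactly the white nodes of $T$, giving $n$ nodes; and since the two types of black nodes are equinumerous (as noted in the excerpt, because the excess is $5$), the legs of $\tau$ (coming from type-I black nodes) number $n-1 + (\text{legs of }T)$... more simply, the Euler-type bookkeeping already done for $T\in\cTfc$ transfers: $\tau$ has $n$ nodes, hence $4$ legs ($5$-regular plane trees with $n$ nodes have $5n - 2(n-1) = 3n+2$ half-edges at nodes, of which $2(n-1)$ are on inner edges, leaving... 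I would just record the standard count) — the point is only that $\tau$ is an honest $5$-regular plane tree.

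The substantive step is the leg-balanced condition. I would argue as follows. An inner edge of $\tau$ is precisely the merge of the two inner edges incident to a black node $b$ of type II in $T$. Fix such an inner edge of $\tau$ with endpoints the white nodes $w, w'$ of $T$ that were adjacent to $b$. Going clockwise around $w$, the half-edge of $\tau$ under consideration is the one that, in $T$, was the edge $(w,b)$; the ``next edge after $(w,b)$ in clockwise order around $w$'' is exactly the edge $e$ appearing in Condition C2, and after reduction this half-edge is followed (clockwise around $w$ in $\tau$) by a leg of $\tau$ if and only if $e$ led to a black node of type I in $T$, and by an inner edge of $\tau$ if and only if $e$ led to a black node of type II. Thus ``the half-edge towards $b$ at $w$ is followed by a leg'' in $\tau$ $\iff$ $e$ does \emph{not} lead to a type-II node; symmetrically at $w'$ with $e'$. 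Condition C2 says exactly one of $e, e'$ leads to a type-II node, i.e.\ exactly one of the two half-edges of this inner edge of $\tau$ is followed (clockwise) by an inner edge and the other by a leg — which is the leg-balanced condition at that inner edge. Conversely Condition C1 is what guarantees that the half-edge of $\tau$ arising as a \emph{leg} (from a type-I black node $b$ with white neighbour $w$) is set up correctly: C1 forces the edge $e$ after $(w,b)$ clockwise around $w$ to lead to a type-II node, i.e.\ this leg of $\tau$ is followed clockwise by an inner edge, consistent with and in fact part of the leg-balanced accounting.

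For the inverse map, given a leg-balanced $5$-regular plane tree $\tau$, I would reconstruct $T$ by the obvious reverse operation: subdivide each inner edge of $\tau$ by inserting a black node of type II, and convert each leg of $\tau$ into an inner edge terminating at a black node of type I (with its own pendant leg and leaf). One colours the original nodes of $\tau$ white. This clearly produces a bicolored plane tree in $\wcTfc$ with $n$ white nodes, $n-1$ black nodes of each type, and excess $5$; its reduction is $\tau$. It remains to check $T \in \cTfc$, i.e.\ C1 and C2 hold — but by the dictionary of the previous paragraph, C1 at a type-I node $b$ translates to ``the leg of $\tau$ replacing $b$ is followed clockwise by an inner edge'', and C2 at a type-II node translates to ``exactly one of the two half-edges of the corresponding inner edge of $\tau$ is followed clockwise by a leg'', and both of these are precisely the leg-balanced hypothesis on $\tau$ read at, respectively, a leg and an inner edge. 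Finally I would note that reduction and this reconstruction are mutually inverse, being locally inverse at each black node / each edge-or-leg of $\tau$, which gives the bijection.

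The main obstacle is purely bookkeeping: correctly matching up the ``next edge in clockwise order around $w$'' in $T$ with ``next half-edge in clockwise order around $w$'' in $\tau$ once the black node between $(w,b)$ and the rest has been spliced out, and keeping straight which of C1/C2 corresponds to the leg-case versus the inner-edge-case of the leg-balanced definition. There is no real analytic or combinatorial difficulty beyond checking that the local surgery preserves the cyclic order of half-edges around each surviving (white) node, which it visibly does since splicing out a degree-$2$ vertex and deleting a pendant leaf are both planarity- and rotation-system-preserving operations. I expect the whole argument to be short once this dictionary is stated cleanly.
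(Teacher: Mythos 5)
Your forward direction is essentially right: the dictionary you set up at a type-II black node (the half-edge of the new inner edge at $w$ is followed clockwise by an inner edge of $\tau$ iff the next edge after $(w,b)$ in $T$ leads to a type-II node) is correct, and it does show that C2 translates exactly into the leg-balanced condition. The genuine gap is in your inverse map, and it comes from a misreading of what the trees in $\cTfc$ look like: you assume every leg of $\tau$ arises from a type-I black node of $T$. In fact a tree $T\in\wcTfc$ with $n$ white nodes has $3n+2$ legs, of which only the $n-1$ type-I black nodes account for legs merged by the reduction; the remaining $2n+3$ legs are attached directly to white nodes and are left untouched. Consequently, your reconstruction (``convert each leg of $\tau$ into an inner edge terminating at a type-I black node'') produces $3n+2$ type-I nodes and excess $(3n+2)-\bigl(2(n-1)+(3n+2)\bigr)=2-2n\neq 5$, so it does not even lie in $\cT_{5}$, contradicting your claim that it lies in $\wcTfc$ with $n-1$ black nodes of each type; moreover C1 fails at every type-I node whose leg is followed clockwise by another leg, which necessarily happens since $3n+2$ legs must share only $n-1$ non-empty sectors.

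The missing idea is the true role of C1, which you describe as ``the leg-balanced hypothesis read at a leg''; but leg-balancedness constrains only inner edges and says nothing about legs (a white node may carry several consecutive legs). C1 is a condition on $T$, not on $\tau$: it pins down which legs of $\tau$ carry a type-I black node in the preimage, namely exactly those legs immediately followed in clockwise order by an inner edge, i.e.\ the \emph{last} leg of each non-empty sector (corner of $\btau$ containing at least one leg). Leg-balancedness guarantees there are exactly $n-1$ non-empty sectors, matching the $n-1$ type-I nodes, so the preimage is unique --- this is precisely what makes the reduction injective, and without C1 distinct trees satisfying C2 would reduce to the same $\tau$. The paper's inverse therefore inserts a black node in the middle of every inner edge \emph{and only in the middle of the last leg of each non-empty sector}; with that correction, your verification that C1 and C2 hold for the reconstructed $T$ and that the two maps are mutually inverse goes through, and is indeed the argument the paper intends.
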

\begin{proof}
The reduction clearly gives a leg-balanced 5-regular plane tree. The inverse mapping proceeds as follows. Let $\tau$ be a leg-balanced 5-regular plane tree with $n$ nodes. A \emph{sector}  of $\tau$ is a corner of the tree $\btau$ obtained from $\tau$ by deleting all legs (and incident leaves). A sector is empty if it contains no leg. The condition of being leg-balanced ensures that $n-1$ among the $2n-2$ sectors of $\tau$ are empty. For each non-empty sector, the \emph{last leg} is the one at the end of the group of legs ordered in clockwise order. We obtain a tree $T\in\cTfc$ from $\tau$ by inserting a black node in the middle of every inner edge and in the middle of every last leg. The two mappings are easily checked to be inverse of one another.  
\end{proof}

To summarize, the bijection $\phi$ between 5c-triangulations and leg-balanced 5-regular plane trees proceeds as follows. Starting with $M\in\cFfc$, take $Q=\iota(M)$ and compute the 5c-biorientation $Y_0$ of $Q$, take $T=\Phi(Y_0)$, and let $\tau=\phi(M)$ be the reduction of $T$. And the inverse mapping $\psi$ proceeds as follows: starting from a leg-balanced 5-regular plane tree $\tau$, expand $\tau$ into a tree $T\in \cTfc$ (as in Figure~\ref{fig:ori_5c_bis}(f)$\to$(e)), then take $Y_0=\Psi(T)$, with $Q$ the underlying quadrangulation of the 10-gon, and let $M=\psi(\tau)$ be the 5c-triangulation such that $Q=\iota(M)$.

\section{Counting results}\label{sec:counting}
\subsection{Generating function of 5c-triangulations}

We first obtain from the bijection $\phi$ an explicit algebraic expression for the generating function of rooted 5c-triangulations, and its derivative. 

\begin{prop}\label{prop:count5c}
Let $f_n$ be the number of rooted 5c-triangulations with $n\geq 1$ inner vertices, and let $\Ffc(t)=\sum_nf_nt^n$ be the associated generating function. Then we have
\[
\sum_n (n-1)f_nt^n=5AB,\ \ \ \Ffc(t)=t\,(1+3A+2AB+A^2+AB^2)-2AB,
\]
where $A\equiv A(t),B\equiv B(t)$ are specified by the algebraic system
\begin{equation}\label{eq:AB}
A=t\,(A+A^2+2AB+3AB^2+B^4),\ \ \ B=t\,(1+2A+B+2AB+B^2+B^3).
\end{equation}
The radius of convergence $\rho\approx 0.24775$ of $\Ffc(t)$ is the larger of the two positive roots of the polynomial $4194304x^6 - 1339392x^5 + 319317x^4 - 1107616x^3 + 561984x^2 - 79104x + 1024$, and there is a positive algebraic constant $\kappa\approx 0.54851$
 such that $f_n= \frac{\kappa}{2\sqrt{\pi}}\,\rho^{-n}n^{-5/2}$. 
\end{prop}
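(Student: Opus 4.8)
The plan is to extract generating-function consequences directly from the bijection $\phi$ of Theorem~\ref{theo:main}, translated to the reduced objects (leg-balanced $5$-regular plane trees), and then to run singularity analysis. First I would set up a rooted version of the tree side: a leg-balanced $5$-regular plane tree with $n$ nodes has, by the leg-balance condition and a leaf-count, exactly $n-1$ of its $2n-2$ sectors empty and $n-1$ nonempty, hence $3n+3$ legs and $n-1$ inner edges (consistent with excess $5$, i.e.\ $(3n+3)-(n-1)=2n+4$... here one must be careful: each node has degree $5$ and $n-1$ inner edges are shared, so the leg count is $5n-2(n-1)=3n+2$; I would recompute this cleanly). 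Rooting a $5$c-triangulation (marking an outer corner, $5$ choices up to the map's automorphisms, which are generically trivial) corresponds under $\phi$ to rooting the tree at one of its corners; the cleanest route is to root at a leg-corner, giving a bijection that explains the factor $5$ and the shift by $-1$ in $\sum_n(n-1)f_nt^n$. So the first serious step is: \emph{determine the precise rooting dictionary} so that $\sum_n(n-1)f_nt^n$ counts trees rooted at a \emph{non-root} corner (or legs, or inner half-edges), pinning down why the answer is $5AB$ rather than something with a different multiplicity.

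Next I would write the algebraic system~\eqref{eq:AB} by decomposing the leg-balanced $5$-regular plane tree at a chosen inner half-edge. Planted at a node, a subtree is described by a choice of how to fill the $4$ remaining slots of that node, each slot being either a leg or an inner edge leading to another planted subtree, subject to the local leg-balance constraint (around each node exactly one of the two half-edges of each inner edge is followed by a leg). This is a context-free-grammar-type specification in two or three auxiliary series; the two series $A$ and $B$ should correspond to the two "states" a planted subtree can be in according to whether the half-edge just above its root-node is followed by a leg or by an inner edge (the type~I / type~II dichotomy of $\cTfc$, or equivalently the empty/nonempty sector condition). Writing the $5$ (for $A$: productions summing to $A+A^2+2AB+3AB^2+B^4$) and the corresponding $B$-equation amounts to enumerating, with multiplicities coming from cyclic positions of the distinguished slot, the ways the $4$ free slots split between legs and the two subtree-states; the term $B^4$ in the $A$-equation and $B^3$ in the $B$-equation are the all-inner-edge or almost-all-inner-edge configurations, and the constant term in $B$ is the single leaf/empty subtree. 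I would then check that the closed forms claimed for $\sum(n-1)f_n t^n = 5AB$ and for $\Ffc(t)$ follow by a dissymmetry-type (tree root-transfer) identity: $\Ffc$ counts trees rooted at an outer corner, which one relates to the node-rooted and edge-rooted series via the standard "vertices $-$ edges $=1$" bookkeeping, producing $t(1+3A+2AB+A^2+AB^2)-2AB$. The bookkeeping constants ($3A$, $A^2$, etc.) should match the number of node-slot patterns weighted appropriately; verifying these is routine algebra once the grammar is fixed.

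For the analytic conclusion I would proceed by standard singularity analysis of algebraic systems. The system~\eqref{eq:AB} is a positive, strongly connected, aperiodic polynomial system in $(A,B)$, so by the Drmota–Lalley–Woods theorem $A(t)$ and $B(t)$ have a common dominant singularity $\rho$ of square-root type: $A(t)=A(\rho)-a\sqrt{1-t/\rho}+O(1-t/\rho)$ and similarly for $B$, with $\rho$ and the limiting values determined by the vanishing of the Jacobian $\det\bigl(I - t\,\partial(\text{RHS})/\partial(A,B)\bigr)$ together with the system itself. Eliminating $A,B$ from these three equations yields a polynomial in $t$ whose relevant root is $\rho$; I expect this elimination to be exactly what produces the degree-$6$ polynomial $4194304x^6-\cdots+1024$, and one checks numerically that $\rho\approx 0.24775$ is its larger positive root (the smaller one being spurious, coming from the other branch of the Jacobian condition). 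Since $\Ffc=t(1+3A+2AB+A^2+AB^2)-2AB$ is an analytic (polynomial) function of $t,A,B$, it inherits the square-root singularity: $\Ffc(t)=\Ffc(\rho)-c\sqrt{1-t/\rho}+\cdots$ with $c>0$ a computable algebraic constant. By transfer theorems $f_n\sim \frac{c}{2\sqrt\pi}\,\rho^{-n}n^{-3/2}$; to match the stated $n^{-5/2}$ one must instead apply transfer to $\sum(n-1)f_nt^n=5AB$, whose coefficients are $(n-1)f_n$, and $AB$ \emph{also} has a pure square-root singularity, giving $(n-1)f_n\sim\frac{5\,c'}{2\sqrt\pi}\rho^{-n}n^{-3/2}$, hence $f_n\sim\frac{5c'}{2\sqrt\pi}\rho^{-n}n^{-5/2}$, and $\kappa=5c'$ with $c'$ the singular coefficient of $AB$; I would verify $\kappa\approx 0.54851$ numerically.

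The main obstacle, in my view, is not the analysis but getting the grammar and the rooting dictionary \emph{exactly} right: the leg-balance constraint is a subtle local condition, and there is real bookkeeping in deciding which distinguished half-edge / corner to root at so that spurious symmetry factors do not appear and so that the two auxiliary series cleanly capture the type~I/type~II (empty/nonempty sector) states. Once~\eqref{eq:AB} and the two closed forms are established, the singularity analysis is textbook: the only delicate points there are confirming that the system is aperiodic and strongly connected (so that a \emph{single} square-root singularity governs everything, with no periodicity-induced extra dominant singularities) and correctly identifying which root of the degree-$6$ polynomial is $\rho$ and that $\Ffc$, $AB$ genuinely pick up nonzero $\sqrt{1-t/\rho}$ coefficients (i.e.\ the "sub-critical composition" degeneracy does not occur). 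I would present the grammar derivation in full, state~\eqref{eq:AB}, quote Drmota–Lalley–Woods and the transfer theorem, and relegate the elimination producing the degree-$6$ polynomial and the numerical values of $\rho$ and $\kappa$ to a computation (ideally with a one-line remark that it is a resultant computation).
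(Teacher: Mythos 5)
Your proposal follows essentially the same route as the paper's proof: planted-tree classes $\cA,\cB$ distinguished by whether the root half-edge is followed by an inner edge or by a leg give the system~\eqref{eq:AB}, cutting at a marked inner edge gives the $\cA\times\cB$ bijection and hence $5AB=\sum(n-1)f_nt^n$, the closed form for $\Ffc$ is exactly the dissymmetry bookkeeping you sketch (the paper marks a leg followed by a leg and uses $F_1-2F_2$, with $3n+2$ legs of which $n-1$ are followed by an inner edge), and the asymptotics come from Drmota--Lalley--Woods plus transfer theorems with the degree-$6$ polynomial obtained by elimination. The only cosmetic difference is how the smaller positive root of that polynomial is discarded: the paper compares with the known radius of convergence $27/256$ for simple triangulations of the pentagon, rather than your Jacobian-branch/numerical argument.
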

\begin{proof}
A \emph{planted tree} is a tree that can be obtained by cutting a leg-balanced 5-regular plane trees at the middle of an inner edge, and keeping one of the two connected components. Such a tree is rooted at the dangling half-edge resulting from the cut. Let $\cA$ (resp. $\cB$) be the family of planted trees whose root half-edge is followed by an inner edge (resp. by a leg) in clockwise order around the incident vertex; and let $A\equiv A(t)$ (resp. $B\equiv B(t)$) be the associated generating function counted by nodes; note that trees counted by $A$ (resp. $B$) have a node (resp. a leaf) as their rightmost child. As illustrated in Figure~\ref{fig:decomp_AB}, a classical root decomposition of these trees ensures that $A,B$ are solution of~\eqref{eq:AB}. 

\fig{width=14cm}{decomp_AB}{Root decomposition of trees in $\cA$ and $\cB$.}

Note that $(n-1)f_n/5$ counts 5c-triangulations with a marked 2-way edge in the 5c-biorientation (the division by $5$ accounts for unmarking the root corner). 
By Theorem~\ref{theo:main}, it also counts leg-balanced 5-regular plane trees with $n$ nodes and a marked inner edge. The generating function for such trees is clearly $AB$, sinc cutting at the marked inner edge yields a pair of planted trees in $\cA$ and $\cB$. To obtain the expression of $\Ffc(t)$, we consider the generating function $F_1(t)$ of leg-balanced 5-regular plane trees with a marked leg followed by a leg in clockwise order around the incident node, and the generating function $F_2(t)$ of leg-balanced 5-regular plane trees with a marked inner edge. We have already seen that $F_2(t)=AB$, and a decomposition at the node incident to the marked leg ensures that $F_1(t)=t(1+3A+A^2+2AB+AB^2)$. In a leg-balanced 5-regular plane tree there are $3n+2$ legs, and $n-1$ of them are followed by an inner edge in clockwise order around the incident node, so that $2n+3$ of them are followed by a leg. 
Hence, $[t^n](F_1(t)-2F_2(t))=\frac{5}{n-1}[t^n]F_2(t)=f_n$.  

The algebraic system for $A,B$ being strongly connected and aperiodic, the Drmota-Lalley-Woods  (see~\cite[VII.6]{FSe09} and references therein) ensures that $A,B$ have same radius of convergence $\rho$ and have a square-root singular expansion in a $\Delta$-neighbourhood of $\rho$. This holds as well for $AB$. Transfer theorems of singularity analysis~\cite[VII.6]{FSe09} then ensure that $(n-1)f_n\sim \frac{c}{\sqrt{\pi}}\rho^{-n}n^{-3/2}$ for a positive constant $c$.  
Regarding explicit computations, easy algebraic manipulations (see Remark~\ref{rk:S} below) yield an algebraic equation of degree $6$ for $B$, from which one obtains the equation for $\rho$ (see~\cite[VII.7]{FSe09}). It has two positive roots, the smaller one $\approx 0.01437$ being excluded as it is smaller than the known~\cite{brown1964enumeration} radius of convergence $27/256$ for the generating function of rooted simple triangulations of the pentagon. One can also obtain an algbraic equation for the series $AB$, from which an equation for the constant $\kappa$ can be extracted (see~\cite[Theo.VII.3,Theo.VII.5]{FSe09}). We find that $\kappa$ is the smallest positive root of 
$2641807540224\,x^{12} - 9996558453964800\,x^{10} + 130110438205440000\,x^8 - 338664164994000000\,x^6 - 1765321451082421875\,x^4 - 1274277847500000000\,x^2 + 551368000000000000$.  
\end{proof}
The series expansion starts as $\Ffc(t)=t+t^6+5\,t^7+ 20\,t^8+75\,t^9+ 270\,t^{10}+ 956\,t^{11}+3365\,t^{12}+11830\,t^{13}+41665\,t^{14}+\cdots$.

\begin{remark}\label{rk:S}
The second equation in~\eqref{eq:AB} is linear in $A$, so that $A$ has a rational expression $A=g(B,t)$ in terms of $B$ and $t$. Replacing $A$ by $g(B,t)$ in the first equation of~\eqref{eq:AB} we obtain the following algebraic equation of degree $6$ for $B$:
\[
- \left( B+1 \right) ^{6}{t}^{2}+2\, \left( 3\,{B}^{2}+1 \right) 
 \left( B+1 \right) ^{2}t-{B}^{2}-2\,B=0.
\] 
We also have $\Ffc(t)$ rational in terms of $t$ and $B$ (upon replacing $A$ by $g(B,t)$ in the above expression of $\Ffc(t)$). Defining $S\equiv S(t)$ as $S=-B/(1+B)$ (so that $B=-S/(1+S)$), we obtain the following algebraic equation for $S$:
\[
{t}^{2}-2\, \left( 4\,{S}^{2}+2\,S+1 \right) \left( 1+S \right) ^{2
}t-S \left( S+2 \right)  \left( 1+S \right) ^{4}=0,
\]
and $\Ffc(t)$ is rational in terms of $S$ and $t$. The series $S(t)$ is exactly the algebraic series, denoted $s(w)$, in Theorem 1 of~\cite{gao2001}, where a rational expression in terms of $t$ and $S(t)$ is given for the generating function of rooted 5-connected triangulations counted by the number of vertices minus $2$. We note that a combinatorial interpretation can be given for $-S$. Indeed, 
the expression of $B$ in~\eqref{eq:AB}
 is of the form $B=\bar{S}+B\,\bar{S}$, with $\bar{S}=t(1+2A+B^2)$. On the other hand, $B=-S/(1+S)$ ensures that $-S=\bar{S}$. 
 The radius of convergence for $-S$ is the same as for $B$, and as shown in~\cite{gao2001} it is also the radius of convergence of the generating function of rooted 5-connected triangulations 
 (the fact that the later radius of convergence is the same as the one of $\Ffc(t)$ can also be established by simple upper/lower bound relations between the counting coefficients, without an explicit computation of $\rho$).  
\hfill$\triangle$
\end{remark}

\subsection{Generating function of 5-connected triangulations with root-vertex degree $5$}

For $M$ a rooted 5c-triangulation, we denote by $v_1,\ldots,v_5$ the outer vertices in clockwise order around the outer face, with $v_1$ incident to the root corner. The \emph{augmented map} $\hat{M}$ of $M$ is the triangulation obtained by adding a vertex $v_{\infty}$ in the outer face, connected to vertices $v_1,\ldots,v_5$, and taking as new root corner the one incident to $v_{\infty}$ in the face $v_1,v_2,v_{\infty}$. 

\begin{lemma}\label{lem:5co}
For $M$ a rooted 5c-triangulation, the augmented map $\hat{M}$ is a 5-connected triangulation iff $M$ has all outer vertices of degree at least $4$ (i.e., there is no outer vertex of degree $3$). For $n\geq 6$ the mapping $M\to\hat{M}$ is a bijection from such rooted 5c-triangulations with $n$ vertices to rooted 5-connected triangulations with $n+1$ vertices and having root-vertex degree~$5$. 
\end{lemma}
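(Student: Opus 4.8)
The plan is to analyze the operation $M\mapsto\hat M$ directly and characterize when $\hat M$ fails to be $5$-connected. First I would observe that $\hat M$ is always a (simple) triangulation of the sphere: every inner face of $M$ stays a triangle, and the outer pentagon gets subdivided by $v_\infty$ into five triangles $v_i v_{i+1} v_\infty$ (indices mod $5$). Since a triangulation is $5$-connected if and only if it has no separating triangle and no separating quadrangle (a fact recalled in the introduction), the task reduces to checking which separating $3$- or $4$-cycles can be created. Any such cycle $\gamma$ in $\hat M$ either avoids $v_\infty$ — in which case $\gamma$ is a cycle of $M$, and it is separating in $\hat M$ iff it is separating in $M$ (the only new vertex is $v_\infty$, which lies in the outer region); since $M\in\cFfc$ has no separating $3$- or $4$-cycle, this case does not arise — or $\gamma$ passes through $v_\infty$. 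So the whole analysis comes down to cycles through $v_\infty$.

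Next I would enumerate cycles through $v_\infty$. Since $v_\infty$ is adjacent exactly to $v_1,\dots,v_5$, a $3$-cycle through $v_\infty$ is $v_\infty v_i v_j$ where $v_iv_j$ is an edge of $M$; this is separating in $\hat M$ iff $v_i,v_j$ are not consecutive on the outer pentagon (if they are consecutive the ``cycle'' bounds one of the five added faces and is non-separating). But $M$ is a triangulation of the $5$-gon with simple outer boundary, so a chord $v_iv_j$ between non-consecutive outer vertices would bound, together with the outer path between them, a region that is a separating triangle or quadrangle of $M$ (depending on the length of that path) unless it encloses no inner vertex — and even then it creates a non-simple situation or a separating short cycle; in any case $M\in\cFfc$ rules this out. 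Hence the only separating short cycles of $\hat M$ through $v_\infty$ are $4$-cycles $v_\infty v_i u v_{i+1}$, where $u$ is an inner vertex of $M$ adjacent to two consecutive outer vertices $v_i,v_{i+1}$. Such a cycle is separating in $\hat M$ precisely when there is a vertex strictly inside it, i.e.\ when $\{v_i,v_{i+1},u\}$ does \emph{not} bound a face of $M$. Now I claim this happens iff some outer vertex of $M$ has degree $3$: if $v_i$ has degree $3$ in $M$ it is incident only to $v_{i-1},v_{i+1}$ and one inner vertex $u$, the triangle $v_{i-1}v_iv_{i+1}$ is non-separating and cutting it off shows $u$ is adjacent to $v_{i-1}$ and $v_{i+1}$ with nothing between — wait, one must be slightly careful: the relevant configuration is that the two faces of $M$ at an outer edge $\{v_i,v_{i+1}\}$ other than the outer face share their third vertex; equivalently $v_i$ (or $v_{i+1}$) has degree $3$. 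Conversely a separating $4$-cycle $v_\infty v_i u v_{i+1}$ forces, by minimality considerations on the region outside it (which contains only $v_\infty$ and parts of the pentagon) together with $\cFfc$-irreducibility applied inside, that one of $v_i,v_{i+1}$ is incident to a single inner vertex, hence has degree $3$. This establishes the stated ``iff''.

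For the bijection statement, I would check the map is well-defined on the sub-family (no outer vertex of degree $3$, which by the above is exactly the $5c$-triangulations with $\hat M$ $5$-connected), is injective since $M$ is recovered from $\hat M$ by deleting $v_\infty$ and keeping the root corner data, and is surjective onto rooted $5$-connected triangulations with root-vertex degree $5$: given such a $\hat M$ with root vertex $x$ of degree $5$, the five neighbours of $x$ form a $5$-cycle (it is separating only if it bounds nothing on the $x$-side other than $x$, which holds since $\deg x = 5$, and it is not separating on the other side since $\hat M$ is $5$-connected, so its removal of $x$ leaves a triangulation of the $5$-gon) whose interior is a triangular dissection $M$ of the pentagon; $M$ has no separating $3$- or $4$-cycle because any such would survive in $\hat M$ (it cannot pass through $x$ as $x$ is removed, and cannot separate using only $v_\infty=x$), and no outer vertex of $M$ has degree $3$ since otherwise $\hat M$ would have a separating $4$-cycle through $x$ as shown above; finally $M$ has no inner face incident to two outer edges (else $\hat M$ would have a vertex of degree $3$ among the neighbours of $x$, contradicting $5$-connectivity), so $M\in\cFfc$. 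Rooting is transferred consistently by the prescribed corner rule, and the vertex count shifts by one. The hardest part will be the careful case analysis of separating $4$-cycles through $v_\infty$ and pinning down the exact equivalence with ``some outer vertex has degree $3$'' — in particular verifying that no \emph{other} inner vertex can play the role of $u$ without already forcing a forbidden short cycle inside $M$; a clean way to phrase it is via the two faces of $M$ incident to each outer edge and whether they share their apex.
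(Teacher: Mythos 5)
Your overall skeleton (reduce the 5-connectivity of $\hat{M}$ to the absence of separating 3- or 4-cycles through $v_{\infty}$, then check injectivity/surjectivity of $M\mapsto\hat{M}$) is the same as the paper's, but the central case analysis contains a genuine error. You claim that the only candidate separating short cycles through $v_{\infty}$ are 4-cycles $v_{\infty},v_i,u,v_{i+1}$ with $u$ an inner vertex adjacent to two \emph{consecutive} outer vertices, and that such a cycle is separating iff $v_i,u,v_{i+1}$ does not bound a face. In fact such a cycle is \emph{never} separating when $M$ is a 5c-triangulation: since $M$ has no separating triangle, the 3-cycle $v_i,u,v_{i+1}$ (whose third edge is the outer edge $\{v_i,v_{i+1}\}$) has empty interior, hence bounds a face, and the 4-cycle then encloses only the two faces $v_{\infty}v_iv_{i+1}$ and $v_iuv_{i+1}$. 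The cycles that actually matter are $v_{\infty},v_i,u,v_{i+2}$ with $u$ an inner vertex adjacent to the two \emph{non-consecutive} outer vertices $v_i,v_{i+2}$; such a cycle is automatically separating (it has $v_{i+1}$ on one side and $v_{i+3}$ on the other), and your enumeration excludes it. Your own treatment of the degree-3 case betrays this: there the inner vertex $u$ is adjacent to $v_{i-1}$ and $v_{i+1}$, which is exactly the configuration your claimed restriction forbids. Consequently the key implication --- all outer degrees at least $4$ implies no separating 4-cycle through $v_{\infty}$ --- is not established in your proposal. The paper closes it by observing that an inner vertex $u$ adjacent to $v_i$ and $v_{i+2}$ yields the 4-cycle $u,v_i,v_{i+1},v_{i+2}$ in $M$, which is non-separating (as $M$ is 5c), hence has empty interior, forcing $v_{i+1}$ to have degree $3$; this is the equivalence you need and do not prove.

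Two smaller remarks. For the forward direction you overcomplicate matters: if some outer vertex of $M$ has degree $3$, it has degree $4$ in $\hat{M}$, which already rules out 5-connectivity (one may alternatively exhibit the separating 4-cycle $v_{\infty},v_{i-1},u,v_{i+1}$ with $v_i$ inside, but then it must be stated with non-consecutive outer vertices). For surjectivity, your verification that the map obtained by deleting the root-vertex has no outer vertex of degree $3$ currently leans on the flawed cycle analysis; it is immediate from the minimum-degree bound (all vertices of a 5-connected triangulation have degree at least $5$, so the former neighbours of the root-vertex keep degree at least $4$), which is essentially the paper's one-line argument.
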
 
\begin{proof}
If $M$ has an outer vertex $v_i$ of degree $3$, then this vertex has degree $4$ in $\hat{M}$, hence $\hat{M}$ can not be $5$-connected.
If all outer vertices of $M$ have degree at least $4$, assume $\hat{M}$ is not 
5-connected. Then it has a separating 3-cycle or 4-cycle passing by $v_{\infty}$. 
There is actually no separating 3-cycle passing by $v_{\infty}$ as this would imply a chord in $M$, which would create a separating 3-cycle or 4-cycle in $M$. 
The fact that all outer vertices have degree at least $4$ easily implies that there is no inner vertex $v$ adjacent to two outer vertices $v_i,v_{i+2}$ in $M$; indeed the 4-cycle $v,v_i,v_{i+1},v_{i+2}$ would contain no vertex in its interior, since $M$ is a 5c-triangulation, and this would force $v_{i+1}$ to have $v$ as single inner neighbour, so that $v_{i+1}$ would have degree~$3$, a contradiction. 
Since there is no such inner vertex $v$, there can be no separating 4-cycle passing by $v_{\infty}$, and thus $\hat{M}$ is 5-connected.   

Finally, deleting the root-vertex in a 5-connected triangulation of root-vertex degree $5$ clearly yields a 5c-triangulation where all outer vertices have degree at least $4$. The mapping $M\to\hat{M}$ being clearly injective, we conclude that it gives a bijection between rooted 5c-triangulations with all outer vertices of degree at least $4$, and rooted 5-connected triangulations with root-vertex degree $5$. 
\end{proof}

A 5c-triangulation is called \emph{non-trivial} if it has at least two inner vertices. 
For $X\subseteq[1..5]$ we let $\cF_X$ be the family of non-trivial rooted 5c-triangulations 
such that the outer vertices of degree $3$ are the $v_i$ with $i\in X$. And we let $F_X(t)$ be the generating function of $\cF_X$ counted by inner vertices. 
A subset $X'\subseteq[1..5]$ is said to be \emph{shelling-compatible} with $X$, written $X'\uparrow X$, if $X'\cap X=\emptyset$ and every element in $X'$ is adjacent (in $\mathbb{Z}/5\mathbb{Z}$) to at least one element in $X$.

\begin{lemma}\label{lem:FX}
For $X\subseteq[1..5]$ we have $F_X(t)=0$ if $X$ contains a pair of adjacent elements in $\mathbb{Z}/5\mathbb{Z}$. If not and if $X$ is not empty, then we have
\begin{equation}\label{eq:FX}
F_X(t)=t^{|X|}\sum_{X'\uparrow X}F_{X'}(t).
\end{equation}
\end{lemma}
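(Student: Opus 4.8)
\textit{Plan.} I will prove the two assertions in turn, the second by an explicit ``vertex-shelling'' bijection.

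\textit{First assertion.} Suppose $X$ contains an adjacent pair $\{i,i+1\}$ (indices in $\mathbb{Z}/5\mathbb{Z}$) and let $M$ be a non-trivial rooted 5c-triangulation in which $v_i$ and $v_{i+1}$ both have degree $3$; I will derive a contradiction. Let $u_i$ (resp. $u_{i+1}$) be the unique inner neighbour of $v_i$ (resp. $v_{i+1}$). The outer edge $\{v_i,v_{i+1}\}$ lies in a single inner face, which is forced to be both $(v_i,v_{i+1},u_i)$ and $(v_i,v_{i+1},u_{i+1})$, so $u_i=u_{i+1}=:u$. The faces incident to $v_i$ or $v_{i+1}$ are then exactly $(v_{i-1},v_i,u)$, $(v_i,v_{i+1},u)$, $(v_{i+1},v_{i+2},u)$; in particular $u$ is adjacent to $v_{i-1},v_i,v_{i+1},v_{i+2}$, and these three triangles cover the whole region cut off by the path $v_{i-1},u,v_{i+2}$ on the side of the outer arc $v_{i-1}v_iv_{i+1}v_{i+2}$, a region containing no vertex other than $v_i,v_{i+1}$. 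Since $M$ is non-trivial it has an inner vertex $w\neq u$, which therefore lies strictly inside the quadrangle $\gamma=(v_{i-1},u,v_{i+2},v_{i+3})$ made of the inner edges $\{v_{i-1},u\},\{u,v_{i+2}\}$ and the outer edges $\{v_{i+2},v_{i+3}\},\{v_{i+3},v_{i-1}\}$, while $v_i$ lies strictly outside $\gamma$. Thus $\gamma$ is a separating quadrangle, contradicting $M\in\cFfc$; hence $F_X(t)=0$.

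\textit{Second assertion, forward map.} Assume now $X\neq\emptyset$ has no adjacent pair, so $1\le|X|\le2$ and $i-1,i+1\notin X$ for every $i\in X$. Take $M\in\cF_X$ and, for $i\in X$, let $u_i$ be the unique inner neighbour of $v_i$. I first claim $\deg_M(u_i)\ge5$: the vertices $v_{i-1},v_i,v_{i+1}$ are consecutive neighbours of $u_i$, so if $\deg_M(u_i)=3$ then $(v_{i-1},v_i,v_{i+1})$ is a $3$-cycle with $u_i$ strictly inside and $v_{i+2}$ strictly outside, while if $\deg_M(u_i)=4$ with fourth neighbour $c$, then $c$ is adjacent to $v_{i-1}$ and $v_{i+1}$ and $(v_{i-1},v_i,v_{i+1},c)$ is a $4$-cycle with $u_i$ strictly inside and $v_{i+2}$ strictly outside; either way there is a separating cycle of length $\le4$, impossible. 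Define $M'$ by deleting from $M$ the vertices $v_i$, $i\in X$, with their incident edges. As these vertices are pairwise non-adjacent with distinct inner neighbours, $M'$ is a triangulation of the pentagon whose outer contour is that of $M$ with $u_i$ put in place of $v_i$ for each $i\in X$; I root $M'$ at the corner inherited from the root of $M$ and label its outer vertices by $[1..5]$ accordingly (position $j$ is $v_j$ if $j\notin X$, and $u_j$ if $j\in X$). A $\le4$-cycle of $M'$ is a non-boundary cycle of $M$ with the same strictly-interior vertices and still a vertex of $M$ strictly outside, so a separating $\le4$-cycle of $M'$ would be separating in $M$; hence $M'$ has none. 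Also $M'$ is non-trivial: it cannot be the wheel $W_5$ (all of whose outer vertices have degree $3$) since $\deg_{M'}(u_i)=\deg_M(u_i)-1\ge4$, and a short degree count using $\deg_M(u_i)\ge5$ and the Euler relation for triangulated pentagons rules out the case of no inner vertex. Thus $M'\in\cF_{X'}$, where $X'\subseteq[1..5]$ is its set of degree-$3$ positions. For $i\in X$, position $i$ is $u_i$ of degree $\ge4$, so $i\notin X'$; for $j\notin X$ the degree of $v_j$ drops by exactly $|\{j-1,j+1\}\cap X|$, so $j\in X'$ forces $j$ to be adjacent in $\mathbb{Z}/5\mathbb{Z}$ to an element of $X$. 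Hence $X'\cap X=\emptyset$ and every element of $X'$ is adjacent to $X$, i.e.\ $X'\uparrow X$.

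\textit{Inverse map and conclusion.} Conversely, given $X'\uparrow X$ and $M'\in\cF_{X'}$, form $M$ by inserting, for each $i\in X$, a new outer-face vertex $v_i$ joined to position $i$ of $M'$ and to positions $i-1$ and $i+1$ (unaffected since $i\pm1\notin X$); the former position-$i$ vertex has degree $\ge4$ in $M'$ because $i\notin X'$, hence becomes an inner vertex of $M$ of degree $\ge5$, and $v_i$ has degree $3$. Each remaining outer vertex of $M$, at a position $j\notin X$, has degree $\deg_{M'}+|\{j-1,j+1\}\cap X|\ge4$, where for $j\in X'$ one uses that $j$ is adjacent to some element of $X$. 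So $M$ is non-trivial with degree-$3$ outer vertices exactly $\{v_i:i\in X\}$, and the remaining point is that $M$ is a 5c-triangulation. This is the main obstacle: a separating triangle or quadrangle $C$ of $M$ cannot be a cycle of $M'$ (that would make it separating in $M'$) and, since an outer vertex cannot be strictly inside a non-boundary cycle, $C$ must pass through some inserted $v_i$; running through the few possibilities for the two edges of $C$ at $v_i$ and shortcutting the detour through $v_i$ by the one or two outer edges of $M'$ it replaces yields a separating $\le4$-cycle of $M'$ (or forces the position-$i$ vertex of $M'$ to have degree $3$, contradicting $i\notin X'$), a contradiction. Finally, the deletion and insertion maps are mutually inverse by construction, $M$ has exactly $|X|$ more inner vertices than $M'$, and summing over all $X'$ with $X'\uparrow X$ gives $F_X(t)=t^{|X|}\sum_{X'\uparrow X}F_{X'}(t)$.
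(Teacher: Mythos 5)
Your proof is correct and follows essentially the same route as the paper: the same argument that two adjacent degree-$3$ outer vertices force a common inner neighbour and hence a separating quadrangle through $v_{i+3}$, and the same shelling bijection given by deleting the degree-$3$ outer vertices (with the inverse insertion map and the separating-cycle case analysis to verify the image is again a 5c-triangulation). You even supply some routine checks the paper leaves implicit (e.g.\ $\deg_M(u_i)\ge 5$ and the non-triviality of $M'$), while compressing the final case analysis that the paper writes out; both versions are sound.
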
  
\begin{proof}
Let $M$ be a non-trivial rooted 5c-triangulation. Assume there are consecutive outer vertices $v_i,v_{i+1}$ of degree $3$. Then there is an inner vertex $v$ that is neighbour of the 4 outer vertices $v_{i-1},v_i,v_{i+1},v_{i+2}$. Since $M$ is non-trivial, the 4-cycle $v_{i-1},v,v_{i+2},v_{i+3}$ contains at least one vertex in its interior, a contradiction. 

Let $X\subseteq[1..5]$ not containing adjacent elements and not empty. Let $M\in\cF_X$. 
Let $M'=\xi(M)$ be obtained from $M$ by deleting the outer vertices of degree $3$ and their incident edges (if the root-vertex $v_1$ is deleted then we root $M'$ at the new outer vertex that previously was the inner neighbour of $v_1$). Clearly $M'$ is a 5c-triangulation. And its set $X'$ of outer vertices of degree $3$ has to be disjoint from $X$; otherwise it would mean that $M$ has an outer vertex of degree $3$ whose unique inner neighbour has degree $4$, a contradiction with $M$ having no separating quadrangle. Also note that every $i\in X'$ has to be adjacent to some $j\in X$, otherwise $v_i$ would be an outer vertex of degree $3$ in $M$, so $i$ would be in $X$. 
Thus the mapping $\xi$ sends  $\cF_X$ to $\cup_{X'\uparrow X}\cF_{X'}$ and is clearly injective. 

It remains to check that $\xi$ is surjective. For $X'\uparrow X$ (with $X'$ not containing adjacent elements), let $M'\in\cF_{X'}$. Let $v_1',\ldots,v_5'$ be the outer vertices of $M'$. For each $i\in X$, add a new outer vertex $v_i$ of degree $3$ covering $v_i'$, i.e., connected to $v_{i-1}',v_i',v_{i+1}'$. Let $M$ be the resulting triangulation of the 5-gon (as before, if a new vertex $v_1$ has been created, we root $M$ at $v_1$). Assume $M$ has a separating 3-cycle or 4-cycle $\gamma$. Since $M'$ is a 5c-triangulation, this cycle $\gamma$ has to pass by a new outer vertex $v_i$ for $i\in X$. Clearly $\gamma$ can not be a 3-cycle otherwise there would be an inner edge connecting $v_{i-1}'$ and $v_{i+1}'$. If $\gamma$ is a 4-cycle, then it has to be of the form $v_{i-1}',v_i,v_{i+1}',v$ for some inner vertex $v$ of $M'$. Note that $v_{i-1}',v_i',v_{i+1}',v$ forms a 4-cycle of $M'$. Its interior contains no vertex since $M'$ is a 5c-triangulation. Hence $v_i'$ has to be of degree $3$ in $M'$ (with $v$ as its unique neighbour), a contradiction. Hence $M$ is a rooted 5c-triangulation. Clearly it is in $\cF_X$ and we have $M'=\xi(M)$.   
This concludes the proof that $\xi$ is a bijection from $\cF_X$ to $\cup_{X'\uparrow X}\cF_{X'}$, which yields~\eqref{eq:FX}. 
\end{proof}

\begin{prop}\label{prop:count_5co}
Let $c_n$ be the number of rooted 5-connected triangulations with $n+2$ vertices and root-vertex degree $5$, and let $\Ffco(t)=\sum_nc_nt^n$ be the associated generating function. Then 
\[
\Ffco(t)=\frac{t^4(1-3t+t^2)}{(1+t)^2}\big(\Ffc(t)-t\big),
\]
where $\Ffc(t)$ is expressed in Proposition~\ref{prop:count5c}.

And there is a positive algebraic constant $\kappa'\approx 0.00042228$
 such that $c_n= \frac{\kappa'}{2\sqrt{\pi}}\,\rho^{-n}n^{-5/2}$, for the same $\rho$ as in Proposition~\ref{prop:count5c}. 
\end{prop}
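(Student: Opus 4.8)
The plan is to obtain the generating function identity by a bijective decomposition, and then to feed the resulting algebraic expression into singularity analysis. For the first part, I would start from Lemma~\ref{lem:5co}, which says that rooted 5-connected triangulations with $n+2$ vertices and root-vertex degree $5$ are in bijection with rooted 5c-triangulations with $n+1$ vertices having all outer vertices of degree at least $4$; shifting the vertex count, $c_n$ counts rooted 5c-triangulations with $n+1$ inner vertices and no outer vertex of degree $3$. So I need the generating function $\cF_\emptyset(t)$ of non-trivial rooted 5c-triangulations with no outer vertex of degree $3$, and the claim is essentially that this equals $\tfrac{(1-3t+t^2)}{(1+t)^2}\big(\Ffc(t)-t\big)$ after the index shift $c_n=[t^{n+1}]\cF_\emptyset$, i.e. $\Ffco(t)=t^4\cF_\emptyset(t)$.

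The heart of the argument is Lemma~\ref{lem:FX}: the family $\{F_X\}_{X\subseteq[1..5]}$ of non-trivial rooted 5c-triangulations indexed by the set $X$ of degree-$3$ outer vertices satisfies $F_X=0$ when $X$ has two cyclically adjacent elements, and $F_X(t)=t^{|X|}\sum_{X'\uparrow X}F_{X'}(t)$ otherwise for nonempty $X$. The nonzero $F_X$ are thus indexed by independent sets of $\mathbb{Z}/5\mathbb{Z}$: $\emptyset$, the five singletons (all giving the same series $F_1$ by symmetry), and the five two-element independent sets (all giving the same series $F_2$). I would write down the two relations coming from the shelling recursion: for a singleton, $\{i-1,i+1\}\uparrow\{i\}$ together with $\emptyset$, plus any singletons shelling-compatible — one checks $\{j\}\uparrow\{i\}$ never holds for $j$ adjacent to $i$ and $j$ not equal to $i$ would force adjacency, so actually the compatible sets are $\emptyset$ and the single two-element independent set $\{i-1,i+1\}$, giving $F_1=t(F_\emptyset+F_2)$; for a two-element set $\{i,i+2\}$, the compatible $X'$ are $\emptyset$ and the singletons $\{i+1\}$ (adjacent to both) — and one must carefully enumerate which singletons and whether any two-element set can be shelling-compatible with $\{i,i+2\}$ — giving $F_2=t^2(F_\emptyset+F_1)$ after collecting symmetric contributions. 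Finally one needs a base relation expressing $F_\emptyset$ (or rather $\Ffc$) in terms of $F_\emptyset,F_1,F_2$: the total generating function $\Ffc(t)-t$ of non-trivial rooted 5c-triangulations decomposes over the ten possible nonzero values of $X$, i.e. $\Ffc(t)-t=F_\emptyset+5F_1+5F_2$. Solving this linear $3\times3$ system over $\mathbb{Q}(t)$ for $F_\emptyset$ in terms of $\Ffc(t)-t$ produces the rational prefactor; I expect the computation to yield $F_\emptyset=\tfrac{(1-3t+t^2)}{(1+t)^2}(\Ffc(t)-t)$ up to the degree shift, so that $\Ffco(t)=t^4 F_\emptyset(t)$ matches the stated formula. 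The main obstacle is the careful bookkeeping of the shelling-compatibility relation $X'\uparrow X$ on the independence poset of $\mathbb{Z}/5\mathbb{Z}$, making sure every symmetry class is counted with the right multiplicity and that no nonzero $F_X$ is overlooked or double-counted; once the linear system is correctly set up, solving it is routine.

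For the asymptotic part, I would invoke Proposition~\ref{prop:count5c}: $A,B$ and hence $\Ffc(t)$ are algebraic with a square-root singularity at $\rho$ in a $\Delta$-domain, so $\Ffc(t)-t$ has a singular expansion of the form $\Ffc(\rho)-\rho - c_0\sqrt{1-t/\rho} + O(1-t/\rho)$ near $\rho$ with $c_0>0$. The rational factor $t^4(1-3t+t^2)/(1+t)^2$ is analytic and nonzero at $\rho$ (one checks $\rho\approx 0.24775$ is not a root of $1-3t+t^2$ nor equals $-1$), so multiplying it in merely rescales the singular coefficient, and $\Ffco(t)$ inherits the same $\rho$ and a square-root-type singularity. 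By the standard transfer theorem (Flajolet–Sedgewick VII.6, as already cited), $[t^n]\Ffco(t)\sim \tfrac{\kappa'}{2\sqrt\pi}\rho^{-n}n^{-5/2}$ with $\kappa' = -2\cdot\big(\text{value of }t^4(1-3t+t^2)/(1+t)^2\text{ at }\rho\big)\cdot c_0 / \rho^{\,?}$; more precisely $\kappa'$ is obtained by multiplying the algebraic constant $\kappa$ of Proposition~\ref{prop:count5c} by the explicit algebraic number $\rho^4(1-3\rho+\rho^2)/(1+\rho)^2$, which is itself algebraic, hence $\kappa'$ is algebraic, and numerically $\kappa'\approx 0.00042228$. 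The only subtlety here is matching normalizations of the singular coefficient with the $n^{-5/2}$ convention used for $\kappa$; this is bookkeeping rather than a real difficulty. I would close by noting that an algebraic equation for $\kappa'$ can be extracted from the one for $\kappa$ together with the minimal polynomial of $\rho$, exactly as in the proof of Proposition~\ref{prop:count5c}.
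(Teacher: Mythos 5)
Your overall route is the same as the paper's (Lemma~\ref{lem:5co} giving $\Ffco(t)=t^4F^{(0)}(t)$, the shelling recursion of Lemma~\ref{lem:FX} reduced by symmetry to three series indexed by $|X|\in\{0,1,2\}$, a linear system, then transfer of the square-root singularity through the analytic, nonvanishing rational prefactor, yielding $\kappa'=\frac{\rho^4(1-3\rho+\rho^2)}{(1+\rho)^2}\kappa$); the asymptotic part of your argument is fine. But there is a genuine error precisely at the step you flag as the crux: the enumeration of shelling-compatible sets. Recall that $X'\uparrow X$ means $X'\cap X=\emptyset$ and every element of $X'$ is adjacent in $\mathbb{Z}/5\mathbb{Z}$ to \emph{at least one} element of $X$ --- adjacency to $X$ is required, not forbidden. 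For $X=\{i\}$ the compatible $X'$ with $F_{X'}\neq 0$ are $\emptyset$, $\{i-1\}$, $\{i+1\}$ and $\{i-1,i+1\}$, so $F^{(1)}=t\,(F^{(0)}+2F^{(1)}+F^{(2)})$, not $F^{(1)}=t\,(F^{(0)}+F^{(2)})$ as you write; your exclusion of the singletons $\{i\pm 1\}$ is in fact inconsistent with your inclusion of $\{i-1,i+1\}$. Likewise, for $X=\{i,i+2\}$ the admissible $X'$ are the independent subsets of $\{i+1,i+3,i+4\}$, namely $\emptyset$, three singletons, and the two pairs $\{i+1,i+3\}$ and $\{i+1,i+4\}$, so $F^{(2)}=t^2\,(F^{(0)}+3F^{(1)}+2F^{(2)})$, not $t^2(F^{(0)}+F^{(1)})$.

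This is not harmless bookkeeping: solving the system you wrote gives $F^{(0)}=\frac{1-t^3}{1+5t+5t^2+9t^3}\,\big(\Ffc(t)-t\big)$, which differs from the stated prefactor already at order $t^2$, so your derivation would not produce the claimed identity. With the correct relations one gets $F^{(1)}=\frac{t(1-t)}{1-3t+t^2}F^{(0)}$ and $F^{(2)}=\frac{t^2}{1-3t+t^2}F^{(0)}$, hence $\Ffc(t)-t=F^{(0)}+5F^{(1)}+5F^{(2)}=\frac{(1+t)^2}{1-3t+t^2}F^{(0)}$, and combining with $\Ffco(t)=t^4F^{(0)}(t)$ yields the proposition. (A minor additional slip: your ``$c_n=[t^{n+1}]\cF_\emptyset$'' is only consistent with $\Ffco=t^4F_\emptyset$ if $\cF_\emptyset$ counts all vertices; with the inner-vertex counting of Lemma~\ref{lem:FX} the shift is $c_n=[t^{n-4}]F^{(0)}$.)
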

\begin{proof}
For $i\in\{0,1,2\}$ let $F^{(i)}(t)$ be the common generating function $F_X(t)$ for $|X|=i$
having no adjacent elements in $\mathbb{Z}/5\mathbb{Z}$. Then it follows from Lemma~\ref{lem:FX}
that
\[
F^{(2)}(t)=t^2\cdot\big(2F^{(2)}(t)+3F^{(1)}(t)+F^{(0)}(t)\big),\ \ \ \ F^{(1)}(t)=t\cdot\big(F^{(2)}(t)+2F^{(1)}(t)+F^{(0)}(t)\big) 
\]
This yields
\[
F^{(1)}(t)=\frac{t(1-t)}{1-3t+t^2}F^{(0)}(t),\ \ \ F^{(2)}(t)=\frac{t^2}{1-3t+t^2}F^{(0)}(t).
\]
Then we have
\[
\Ffc(t)-t=F^{(0)}(t)+5F^{(1)}(t)+5F^{(2)}(t)=\frac{(1+t)^2}{1-3t+t^2}F^{(0)}(t).
\]
Finally Lemma~\ref{lem:5co} ensures that $\Ffco(t)=t^4F^{(0)}(t)$. 

Regarding asymptotic enumeration, since the rational prefactor in $\Ffco(t)$ has no positive singularity, the series $\Ffco(t)$ has same singularity and singular type as $\Ffc(t)$, and we have $\kappa'=\frac{\rho^4(1-3\rho+\rho^2)}{(1+\rho)^2}\kappa\approx 0.00042228$.  
\end{proof}

\begin{remark}\label{rk:heaps}
Another way to see the relation between $\Ffc(t)$ and $\Ffco(t)$ is to observe that $\Ffc(t)-t=F^{(0)}(t)\cdot H(t)$ where $H(t)$ is the generating function of heaps of pieces on the 5-cycle graph, forbidding two same pieces being directly on top of each other. Thus $H(t)=\widetilde{H}(t/(1+t))$ where $\widetilde{H}(t)$ is the generating function of heaps of pieces on the 5-cycle graph, which by Viennot's involution~\cite{viennot2006heaps} is equal to $1/(1-5t+5t^2)$.  \hfill$\triangle$
\end{remark}

The series expansion starts as $\Ffco(t)=t^{10}+ 5\,t^{12}+ 10\,t^{13}+ 40\,t^{14}+ 131\,t^{15}+ 465\,t^{16}+ 1630\,t^{17}+ 5815\,t^{18}+ 20815\,t^{19}+74992\,t^{20}+\cdots$. The first term is given by the icosahedron, which has only one rooting. The second term is given by the 5-connected triangulation shown in Figure~9(a) of~\cite{gao2001}; it has $12$ vertices of degree $5$ and two vertices of degree $6$, the number of distinct rootings at degree $5$ (resp. degree $6$) vertices being $5$ (resp. $1$). The third term is given by the 5-connected triangulation shown in Figure~9(b) of~\cite{gao2001}; it has $12$ vertices of degree $5$ and three vertices of degree $6$, the number of distinct rootings at degree $5$ (resp. degree $6$) vertices being $10$ (resp. $3$).

\begin{remark}\label{rk:number_degree5}
Note that $6n\,c_n/5$ gives the number of rooted 5-connected triangulations with $n+2$ vertices and having a marked vertex of degree $5$. On the other hand, it is shown in~\cite{gao2001} that the number $d_n$ of rooted 5-connected triangulations with $n+2$ vertices has asymptotic equivalent $d_n\sim \frac{\kappa''}{2\sqrt{\pi}}\rho^{-n}n^{-5/2}$, where $\kappa''\approx 0.0010131$. Letting $X_n$ be the number of vertices of degree $5$ in a random rooted  5-connected triangulation with $n+2$ vertices we thus have $\mathbb{E}(X_n)=\frac{6nc_n/5}{d_n}\sim \xi n$, where $\xi=6\kappa'/(5\kappa'')\approx 0.50016$. 
 \hfill$\triangle$
\end{remark}

\section{Algorithmic applications and concluding remarks}\label{sec:algo}
\subsection{Random generation of 5-connected triangulations} 
Efficient bijective random generators are known for 3-connected (i.e., simple) triangulations~\cite{PS06} and for 4-connected (i.e., irreducible) triangulations~\cite{Fu07b}. It is also possible to derive a random generator for 4-connected triangulations from the one for 3-connected triangulations by the core-extraction method of~\cite{Schaeffer99}. However it seems difficult to apply this method to extract a random generator for 5-connected triangulations from the one for 4-connected triangulations; indeed the scheme relating 4-connected and 5-connected triangulations necessitates more general irreducible maps (with faces of degree 3 and 4) and a two-variable substitution scheme~\cite{gao2001}. Instead, we can more directly derive efficient random generators from our main bijection (Theorem~\ref{theo:main}).    

We recall that the Boltzmann distribution on a combinatorial class $\cC=\cup_n\cC_n$ assigns probability $x^n/C(x)$ to objects in $\cC_n$ for $n\geq 0$, where $x$ is any fixed positive value such that the generating function $C(x)$ converges. 
Via the sampling rules of~\cite{DuchonFLS04},
 the grammar~\eqref{eq:AB} shown in Figure~\ref{fig:decomp_AB} translates into 
Boltzmann samplers $\Gamma A(x)$ and $\Gamma B(x)$ for the tree families $\cA$ and $\cB$, and a Boltzmann sampler $\Gamma P(x)$ for the product class $\cP=\cA\times\cB$, such that the complexity of generating a tree is linear in its size. 
Then the singular generator $\Gamma P(\rho)$ combined with rejection~\cite[Sec.7.2]{DuchonFLS04} yields an exact-size uniform sampler for $\cP_n$ with complexity $O(n^2)$ (strategies to improve the running time are presented in~\cite{Spo21}) and an approximate-size sampler\footnote{In approximate-size sampling the size of the output is required to belong to the target-set $[n(1-\epsilon),n(1+\epsilon)]$ for some fixed $\epsilon\in(0,1)$.} for $\cP$  of complexity $O(n/\epsilon)$. 
Via the bijection $\Psi$ this gives an exact-size sampler $\Gamma F_{5c}[n]$ and an  approximate-size sampler $\Gamma F_{5c}[n,\epsilon]$ for rooted 5c-triangulations, 
with respective complexities $O(n^2)$ and $O(n/\epsilon)$, where the size is the number of inner vertices.

Regarding random generation in the family $\cFco=\cup_n(\cFco)_n$ of rooted 5-connected triangulations with root-vertex degree $5$ (with $n$ the number of vertices minus $2$),  
to obtain an exact-size sampler $\Gamma F_{5co}[n]$ we may repeatedly call $\Gamma F_{5c}[n-4]$ until the success situation where the generated object has no outer vertex of degree $3$. Then we obtain a (uniformly random) triangulation in $(\cFco)_n$ 
by the augmentation step of Lemma~\ref{lem:5co} adding a vertex $v_{\infty}$ connected to the outer vertices. The probability of success of each call to $\Gamma F_{5c}[n-4]$ is 
$\frac{[t^{n}]\Ffco(t)}{[t^{n-4}]\Ffc(t)}\sim \frac{1-3\rho+\rho^2}{(1+\rho)^2}\approx 0.20433$, the inverse of which gives the expected number of calls to $\Gamma F_{5c}[n-4]$ before success. Given a single call $M\leftarrow\Gamma F_{5c}[n-4]$, one may also  delete outer vertices of degree $3$ one by one until none remains on the outer contour, and then perform the augmentation step of Lemma~\ref{lem:5co}; the obtained object $M'=\chi(M)$ has (random) size $m=n-O(1)$ and is uniformly distributed on $(\cFco)_m$. 
 The fact that $n-m$ is $O(1)$ in probability follows from the relation $\Ffc(t)-t=H(t)\Ffco(t)$ of Remark~\ref{rk:heaps}, where $H(t)$ accounts for the deleted vertices, and from the fact that $H(t)$ is analytic at $\rho$. 
 This strategy is well-adapted to approximate-size sampling: rather than repeating calls to $\Gamma \Ffc[n,\epsilon])$ until the generated 5c-triangulation has no outer vertex of degree $3$, one can use a single call $M\leftarrow\Gamma \Ffc[n,\epsilon]$ and return $M'=\chi(M)$. With high probability the size $m$ of $M'$ remains in $[n(1-\epsilon),n(1+\epsilon]$ (if not, it is still very close from the lower bound). 

 Finally let us discuss how a random sampler $\Gamma F[n]$ can be obtained for rooted 5-connected triangulations with $n+2$ vertices, with no constraint on the degree of the  root-vertex. Given such a rooted triangulation, the operation of deleting the two edges of the root-face that are incident to the root-vertex clearly yields a rooted 5c-triangulation with the same number of vertices, and the mapping is injective. A rooted 5c-triangulation that is the image of a rooted 5-connected triangulation is called \emph{admissible}. 
 Testing if a rooted 5c-triangulation is admissible can easily be done in linear time (in the corresponding rooted triangulation, a separating 4-cycle has to pass by one of the two added  edges).    
 Then $\Gamma F[n]$ repeatedly calls $\Gamma \Ffc[n-3]$ until the generated object is admissible, then adds the edges $\{v_1,v_4\}$ and $\{v_1,v_3\}$. With the notation of Proposition~\ref{prop:count5c} and Remark~\ref{rk:number_degree5}, the probability of success of each call is $\frac{d_n}{[t^{n-3}]\Ffc(t)}
 \sim \frac{\kappa''}{\rho^3\kappa}\approx 0.12146$, whose inverse gives the expected number of calls to $\Gamma \Ffc[n-3]$ before success. Similarly, in approximate-size sampling, the generator  $\Gamma F[n,\epsilon]$ repeatedly calls $\Gamma \Ffc[n,\epsilon]$ until the generated object is admissible, and then adds  the edges $\{v_1,v_4\}$ and $\{v_1,v_3\}$.

\subsection{Optimal encoding of 5-connected triangulations} 
An (asymptotically) optimal encoder for a family $\cC=\cup_n\cC_n$ having exponential growth rate $\gamma>1$ is an injective mapping 
from $\cC$ to $\{0,1\}^*$ such that the maximal length $k_n$ over all encoding words of objects in $\cC_n$ satisfies $k_n\leq n\log(\gamma)+o(n)$ (in which case one must have $k_n\sim n\log(\gamma)$). Bijections for 3-connected triangulations yield an optimal encoding procedure~\cite{PS06,FuPoScL} (by reducing the problem to encoding quaternary trees), with application to mesh  compression. We have here $k_n\sim \alpha_3n$ with $\alpha_3=\log(256/27)\approx 3.30518$. 
Similary, the bijection for 4-connected triangulations~\cite{Fu07b} yields an optimal encoding procedure for this class (by reducing the problem to encoding ternary trees), giving $k_n\sim \alpha_4n$ with $\alpha_4=\log(27/4)\approx 2.75489$. 

Similarly we show here that our bijection yields an optimal encoder for 5-connected triangulations. The encoding of our trees is slightly more involved, as they have two types of nodes.  
Nevertheless, we can achieve optimality by following the ideas of~\cite{chottin1975demonstration,chottin1981enumeration,bacher2013multivariate} (which provide combinatorial proofs of multivariate Lagrange inversion formulas). Let $M$ be a 5-connected planar triangulation with $n$ vertices. We may mark and delete a vertex of degree $5$ to obtain a 5c-triangulation having $n-6$ inner vertices. Consider the associated leg-balanced 5-regular plane tree $T$. We may then mark a white node incident to a single inner edge. Cutting at the middle of this edge, we obtain a tree in $\cA$ with $n-7$ nodes. We have thus reduced the problem to finding an optimal encoder for the class $\cA=\cup_n\cA_n$ (the size $n$ being the number of nodes). 

Let $T\in\cA_n$. Note that the nodes of $T$ are of two types, A-nodes and B-nodes depending on the rightmost child being a node or a leaf. In each type there are 8 subtypes as shown at the top of  Figure~\ref{fig:coding_A_compact}. Let $n_j$ (resp. $n_j'$) be the number of A-nodes (resp. B-nodes) of $T$ having subtype $j\in[1..8]$. Letting $n_A=\sum_jn_j$ and $n_B=\sum_jn_j'$, note that we have the two compatibility conditions 
\begin{align*}
n_A&=1+n_1+2n_2+n_3+n_4+n_5+n_6+n_7+n_2'+n_3'+n_5'+n_6',\\
n_B&=n_3+n_4+2n_5+2n_6+2n_7+4n_8+n_4'+n_5'+n_6'+2n_7'+3n_8'.
\end{align*}
The 1st (resp. 2nd) line results from counting A-nodes (resp. B-nodes) according to their parent.   
We can encode $T$ by a pair $(w,w')$ of words where $w\in\frak{S}(a_1^{n_1}\cdots a_8^{n_8})$
and $w'\in\frak{S}(b_1^{n_1'}\cdots b_8^{n_8'})$, as follows. We order the A-nodes as $v_1,\ldots,v_{n_A}$ according to a left-to-right DFS traversal of $T$. Deleting the parent-edges of all non-root A-nodes yields an ordered forest $T_1,\ldots,T_{n_A}$ of trees such that  for each $i\in[1..n_A]$ the tree $T_i$ is rooted at $v_i$ and all other nodes of $T_i$ are B-nodes, as shown on the right side of Figure~\ref{fig:coding_A_compact}. We may then order the B-nodes of $T_i$ according to a left-to-right DFS traversal of $T_i$. Concatenating the ordered lists of B-nodes for $T_1,\ldots,T_{n_A}$ yields an ordered list $v_1',\ldots,v_{n_B}'$ of the B-nodes of $T$. The word $w=w_1\ldots w_{n_A}$ is then such that $w_i=a_j$ if $v_i$ has subtype $j\in[1..8]$. Similarly, the word $w'=w_1\ldots w_{n_B}$ is such $w_i'=b_j$ if $v_i'$ has subtype $j\in[1..8]$, see Figure~\ref{fig:coding_A_compact}. 

\fig{width=14cm}{coding_A_compact}{Encoding of a tree $T\in\cA_{n}$ ($n=27$) by two words $w,w'$,  each on an 8-letter alphabet, with $|w|+|w'|=n$.}

Conversely, consider a pair of words $w,w'$ respectively on the alphabet $\{a_1,\ldots,a_8\}$ (resp. $\{b_1,\ldots,b_8\}$) whose letter multiplicities $n_j,n_j'$ add up to $n$ and satisfy the above two compatibility conditions. We first turn $w$ into an ordered list $v_1,\ldots,v_{n_A}$ of A-nodes, and turn $w'$ into an ordered list $v_1',\ldots,v_{n_B}'$ of B-nodes according to the correspondence in the upper part of Figure~\ref{fig:coding_A_compact}. Note that the A-spots and B-spots for children of these nodes are still unoccupied. 
Then, from the list $v_1',\ldots,v_{n_B}'$ we may iteratively construct an ordered forest of trees that contain only B-nodes whose B-spots are all occupied, and such that $v_1',\ldots,v_{n_B}'$ gives the ordered list of nodes in left-to-right DFS traversal of the trees. 
This classically requires a 
\L{}ukasiewicz condition: letting $s=n_4'+n_5'+n_6'+2n_7'+3n_8'$ be the total number of B-spots over $v_1',\ldots,v_{n_B}'$,  the number of B-spots in the strict prefix $v_1',\ldots,v_i'$ has to be strictly larger than $i+s-n_B$, for $1\leq i<n_B$. (This condition is met by a proportion $\frac{n_B-s}{n_B}=\frac{n_3+n_4+2n_5+2n_6+2n_7+4n_8}{n_B}$ of the words $w'\in\frak{S}(b_1^{n_1'}\cdots b_8^{n_8'})$.) If this condition is satisfied, we may then attach the $n_B-s$ obtained trees at the B-spots (ordered from left to right) of the A-nodes, to obtain an ordered forest $T_1,\ldots,T_{n_A}$ of trees such that $T_i$ is rooted at $v_i$, all other nodes of $T_i$ are B-nodes, and all B-spots are occupied. Finally we aggregate the trees into a single tree in $\cA_n$. Again this requires a  
\L{}ukasiewicz condition:  the total number of A-spots in $T_1,\ldots,T_i$ has to be at least $i$ for $1\leq i<n_A$. (This condition is met by a proportion $1/n_A$ of the pairs $w,w'$ with letter multiplicities $n_j,n_j'$ and such that $w'$ satisfies the first \L{}ukasiewicz condition.) 

The mapping from trees to word pairs and conversely from word pairs to trees clearly have linear time complexity. Note that a word pair $w,w'$ can be stored as an integer $n_A$ (giving the number of A-nodes) followed by a word of length $n$ on the $8$-letter alphabet $\{\ell_1,\ldots,\ell_8\}$. Thus, up to the negligible overhead integer $n_A$ of bit size $O(\log(n))$, we have an encoder with 3 bits per vertex. To achieve better compression rate, consider the letter multiplicities $n_j,n_j'$ of $w,w'$. Then, using e.g. Huffman codes with letters grouped into blocks of size $o(\log(n))$, one can encode $w,w'$ by a binary  word $W$ of length $\log\binom{n_A}{n_1,\ldots,n_8}+\log\binom{n_B}{n_1',\ldots,n_8'}+o(n)$, with linear time complexity for coding/decoding. By the above discussion, for compatible multiplicities $n_j,n_j'$ adding up to $n$, 
the number of word pairs $w,w'$ having these multiplicities and encoding trees in $\cA_n$ is   $\frac{n_3+n_4+2n_5+2n_6+2n_7+4n_8}{n_B}\frac1{n_A}\binom{n_A}{n_1,\ldots,n_8}\binom{n_B}{n_1',\ldots,n_8'}$. (The number $|\cA_n|$ is thus equal to the sum of all these contributions over compatible $n_j,n_j'$ adding up to $n$.)  
Hence we have 
$\log\binom{n_A}{n_1,\ldots,n_8}+\log\binom{n_B}{n_1',\ldots,n_8'}\leq \log(|\cA_n|)+O(\log(n))$. This ensures that the length of $W$ is at most $\log(|\cA_n|)+o(n)$. To summarize we have:

\begin{prop}
The above process gives an optimal encoder for 5-connected planar triangulations: the length of any coding word in size $n$ is bounded by $\alpha_5\,n+o(n)$ where $\alpha_5:=\log(1/\rho)\approx 2.013$.  
Coding and decoding have linear time complexity. 
\end{prop}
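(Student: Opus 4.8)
Essentially all the ingredients appear in the discussion preceding the statement; the proof is to assemble them into an injection into a tree family and then invoke singularity analysis. First I would make the reduction chain explicit and injective. Given a $5$-connected triangulation $M$ with $n$ vertices, canonically mark a vertex of degree $5$ together with one of its incident corners (a degree-$5$ vertex exists since $M$ has minimum degree $5$ and average degree $<6$); this roots $M$ with root-vertex of degree $5$, so deleting that vertex yields, by Lemma~\ref{lem:5co} read backwards, a rooted 5c-triangulation with $n-6$ inner vertices. Apply the bijection $\phi$ of Theorem~\ref{theo:main} to obtain a leg-balanced $5$-regular plane tree with $n-6$ nodes, canonically mark a white node incident to a single inner edge (a leaf of the tree obtained by suppressing legs, which exists for $n$ large), and cut at the midpoint of that edge, keeping the component \emph{not} containing the marked node; by the leg-balanced condition its root half-edge is followed by an inner edge, so this component lies in $\cA_{n-7}$. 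Each step is reversible given the $O(1)$ canonical marking data, so the composition injects size-$n$ triangulations into $\cA_{n-7}$, and it suffices to produce an optimal encoder for $\cA=\cup_m\cA_m$.

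Second, I would formalize the word-pair encoding already described: the two DFS readings associate to $T\in\cA_m$ a pair $(w,w')$ over two $8$-letter alphabets with $|w|+|w'|=m$, and the triple $(w,w',n_A)$ determines $T$. The combinatorial core is the counting identity: for compatible multiplicity vectors $(n_j)$, $(n_j')$ adding up to $m$, the number of pairs $(w,w')$ with those multiplicities realized by some $T\in\cA_m$ equals
\[
\frac{n_3+n_4+2n_5+2n_6+2n_7+4n_8}{n_B}\cdot\frac1{n_A}\binom{n_A}{n_1,\ldots,n_8}\binom{n_B}{n_1',\ldots,n_8'}.
\]
This I would prove by two applications of the cyclic lemma in the style of~\cite{chottin1975demonstration,chottin1981enumeration,bacher2013multivariate}: one for the \L{}ukasiewicz condition that assembles the B-letters into a valid B-node forest with all B-spots filled, one for the \L{}ukasiewicz condition that aggregates that forest into a single tree with all A-spots filled, the two compatibility equations on the multiplicities being exactly what makes these conditions satisfiable. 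Summing the identity over all compatible multiplicities recovers $|\cA_m|$, so each individual product $\binom{n_A}{n_1,\ldots,n_8}\binom{n_B}{n_1',\ldots,n_8'}$ is at most $m^2\,|\cA_m|$ (the two rational proportions each being $\geq 1/m$ for the relevant vectors), whence $\log\binom{n_A}{n_1,\ldots,n_8}+\log\binom{n_B}{n_1',\ldots,n_8'}\leq\log|\cA_m|+O(\log m)$.

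Third, I would turn this into an actual bit length. Store the code of $T$ as the integer $n_A$ (bit size $O(\log m)$, negligible) followed by a binary word $W$ encoding $(w,w')$ via Huffman codes over the two alphabets with letters grouped into blocks of length $o(\log m)$; the standard entropy estimate gives $|W|\leq\log\binom{n_A}{n_1,\ldots,n_8}+\log\binom{n_B}{n_1',\ldots,n_8'}+o(m)\leq\log|\cA_m|+o(m)$. Finally, by Proposition~\ref{prop:count5c} (Drmota--Lalley--Woods applied to the system for $A,B$) the series $A(t)$ has radius of convergence $\rho$ and a square-root singularity, so $|\cA_m|=\Theta(\rho^{-m}m^{-3/2})$ and $\log|\cA_m|=m\log(1/\rho)+O(\log m)$. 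With $m=n-7$ this bounds the code length of any size-$n$ triangulation by $n\log(1/\rho)+o(n)=\alpha_5 n+o(n)$, and the encoder is optimal in the sense of the definition given, since the exponential growth rate of $5$-connected triangulations is also $1/\rho$ (Remark~\ref{rk:S}). Linear time is immediate: each reduction step, the two DFS readings and their inverses (forest assembly, tree aggregation), and block Huffman coding/decoding all run in time linear in the size.

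The main obstacle is the enumeration identity: one must verify that the spot conventions across the sixteen subtypes in Figure~\ref{fig:coding_A_compact} produce precisely the two stated compatibility relations, so that both cyclic-lemma steps apply and their quotient counts exactly the admissible word pairs; the rest of the argument is routine singularity analysis and bookkeeping.
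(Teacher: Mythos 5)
Your proposal is correct and follows essentially the same route as the paper: reduce a $5$-connected triangulation to a planted tree in $\cA_{n-7}$ via deleting a degree-$5$ vertex, the bijection $\phi$, and cutting at an inner edge incident to a white node with a single inner edge; encode that tree by the DFS word pair $(w,w')$ whose count for fixed multiplicities is given by the two cycle-lemma (\L{}ukasiewicz) factors; then conclude with block Huffman coding and the square-root singularity of $A(t)$ at $\rho$. The extra details you supply (injectivity of the reduction chain, the leg-balanced condition forcing the kept component into $\cA$, and the $\mathrm{poly}(m)\,|\cA_m|$ bound on the product of multinomials for realized multiplicities) are consistent with, and merely make explicit, what the paper leaves implicit.
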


\subsection{Substitution approach and open questions on bijective extensions}

An explicit algebraic expression for the generating function of rooted 5-connected planar triangulations was obtained in~\cite{gao2001}, by a bivariate substitution method in the  extended setting of maps having both triangular and  quadrangular inner faces. 
To briefly summarize the approach, one can define for each $p\geq 3$ a \emph{$p$-map} as a simple planar map whose outer face contour is a simple cycle\footnote{In~\cite{gao2001} the root-face contour is not required to be simple; substitution relations are very similar in both models.} 
of length $p$, and whose inner faces have degree in $\{3,4\}$. In such a map, a \emph{separating cycle} is a cycle whose interior and exterior both contain at least one vertex. A \emph{$p$-map} is called \emph{irreducible} if it has no separating 3-cycle, and is called \emph{strongly irreducible} if it has no separating 3-cycle nor separating 4-cycle. 
One can then consider the generating function $M_p(x,y)$ of rooted $p$-maps with variable $x$ (resp. $y$) for the number of triangular (resp. quadrangular) inner faces; and similarly the generating function $\tM_p(x,y)$ (resp. $\sM_p(x,y)$) of irreducible (resp. strongly irreducible) rooted $p$-maps. The operation of emptying the maximal separating 3-cycles in a $p$-map yields the generating function relation
\begin{equation}\label{eq:irred}
M_p(x,y)=\tM_p(\tx,\ty)\ \mathrm{for}\ p\geq 4,\ \mathrm{where}\ \tx=M_3(x,y),\ \ty=y,
\end{equation}
and for $p=3$ the slightly more involved relation $M_3(x,y)-x=\tM_3(\tx,\ty)-\tx$. 
Then, the operation of emptying the maximal separating 4-cycles in an irreducible $p$-map yields the generating function relation
\begin{equation}\label{eq:strong_irred}
\tM_p(\tx,\ty)=\sM_p(\sx,\sy)\ \mathrm{for}\ p\geq 5,\ \mathrm{where}\ \sx=\tx,\ \sy=\tM_4(\tx,\ty)-2\tx^2.
\end{equation}
The substitution relations are more involved for $p=4$ (with a series-parallel decomposition on top of the substitution relation, as in~\cite{mullin1968enumeration}), and above all for $p=3$ which requires a careful inclusion-exclusion argument as explained in~\cite{gao2001}. The main reason for $p\geq 5$ being easier is that, in that case, a 4-cycle in an irreducible $p$-map is separating if and only if its interior contains a vertex, so that the maximal separating 4-cycles are interior-disjoint.  

As shown in~\cite{gao2001}, by solving a functional equation given by root-edge deletion, a Lagrangean expression\footnote{A bivariate generating function $f(x,y)$ is said to have a Lagrangean expression if it can be rationally expressed in terms of two series $u,v$ such that $x$ and $y$ are also rationally expressed in terms of $u,v$.} can be obtained respectively for $M_3(x,y)$ and for $M_4(x,y)$. Then a Lagrangean expression can also be computed for any $M_p(x,y)$ with $p\geq 5$, by induction on $p$, since root-edge deletion in a rooted $(p-2)$-map yields an explicit equation relating the counting series $M_i(x,y)$ with $i\in\{p-2,p-1,p,3,4\}$, having linear dependency on $M_p(x,y)$. For instance one gets
\begin{equation}\label{eq:M56}
M_5(x,y)=u(6u^4\!-5u^2v-\!10v^2\!+5v),\ \ M_6(x,y)=7u^6\!+\!17u^4v-4u^4\!-48u^2v^2\!+\!15u^2v-4v^3\!+3v^2,
\end{equation}
where 
\[
x=\frac{3u^3-2uv+u}{(1+v)^3},\ \ \ y=\frac{v-u^2}{(1+v)^3}.
\]

Then, via~\eqref{eq:irred} and~\eqref{eq:strong_irred}, for $p\geq 5$ the Lagrangean expression for $M_p(x,y)$ is turned into a Lagrangean expression for $\sM_p(\sx,\sy)$. The expression of $\sM_p(\sx,\sy)$ in terms of $u,v$ is the same\footnote{For $p\in\{3,4\}$ the expression changes due to more involved substitution relations.} 
as the one for $M_p(x,y)$, as given in~\eqref{eq:M56} for $p\in\{5,6\}$, only the expressions for $\sx,\sy$ are modified, to
\[
\sx=u-2uv+u^3,\ \ \ \sy=v\!-\!u^2\!-\!v^2\!+\!3u^2v\!-\!u^4\!-\!8u^2v^2\!-\!2u^6\!+\!8u^4v.
\]
One can extract the bivariate expansions of $u=u(\sx,\sy)$ and $v=v(\sx,\sy)$, and then the bivariate expansion of $\sM_p(\sx,\sy)$. By the Euler relation, the series $F_p(t,x,y):=t^{1-p/2}\sM_p(t^{1/2}x,t y)$ is the generating function of strongly irreducible rooted $p$-maps with variables $t,x,y$ for the numbers of inner vertices, triangular inner faces, and quadrangular inner faces. For instance we obtain
{\footnotesize
\begin{align*}
F_5(t,x,y)=&\ \left(5\,{x}^{3}+5\,xy\right)+ \left( {x}^{5}+5\,{x}^{3}y+5\,x{y}^{2} \right) t\ +\ 
 \left( 5\,{x}^{5}y+15\,{x}^{3}{y}^{2}+10\,x{y}^{3} \right) {t}^{2}\\
 &+\left( 10\,{x}^{7}y+50\,{x}^{5}{y}^{2}+70\,{x}^{3}{y}^{3}+25\,x{y}^{4
} \right) {t}^{3}+ \left( 25\,{x}^{9}y+185\,{x}^{7}{y}^{2}+430\,{x}^{5
}{y}^{3}+355\,{x}^{3}{y}^{4}+70\,x{y}^{5} \right) {t}^{4}\\
&+ \left( 65\,
{x}^{11}y+665\,{x}^{9}{y}^{2}+2280\,{x}^{7}{y}^{3}+3240\,{x}^{5}{y}^{4
}+1770\,{x}^{3}{y}^{5}+210\,x{y}^{6} \right) {t}^{5}\\
&+ \left( {x}^{15}\!+\!
190\,{x}^{13}y\!+\!2430\,{x}^{11}{y}^{2}\!+\!11180\,{x}^{9}{y}^{3}\!+\!23295\,{x}^
{7}{y}^{4}\!+\!22422\,{x}^{5}{y}^{5}\!+\!8550\,{x}^{3}{y}^{6}\!+\!660\,x{y}^{7}
 \right) {t}^{6}+\cdots
\end{align*}
\begin{align*}
F_6(t,x,y)=&\ \left(14{x}^{4}\!+\!21{x}^{2}y\!+\!3{y}^{2}\right)+ \left( 7x^{6}\!+\!36{x}^{4}y\!+\!39{x}^{2}{y}^{2}\!+\!2\,{y}^{3} \right) t+ \left( 3\,{x}^{8}\!+\!57\,{x}^{6}y\!+\!
159\,{x}^{4}{y}^{2}\!+\!114\,{x}^{2}{y}^{3}\!+\!3\,{y}^{4} \right) {t}^{2}
\\
 &+\left( 2\,{x}^{10}+114\,{x}^{8}y+558\,{x}^{6}{y}^{2}+844\,{x}^{4}{y}^
{3}+375\,{x}^{2}{y}^{4}+6\,{y}^{5} \right) {t}^{3}\\
&+ \left( 3\,{x}^{12}
+285\,{x}^{10}y+2055\,{x}^{8}{y}^{2}+5006\,{x}^{6}{y}^{3}+4665\,{x}^{4
}{y}^{4}+1302\,{x}^{2}{y}^{5}+14\,{y}^{6} \right) {t}^{4}
\\
&+ \left( 6\,{
x}^{14}+768\,{x}^{12}y+7551\,{x}^{10}{y}^{2}+26514\,{x}^{8}{y}^{3}+
40320\,{x}^{6}{y}^{4}+25416\,{x}^{4}{y}^{5}+4662\,{x}^{2}{y}^{6}+36\,{
y}^{7} \right) {t}^{5}+\cdots
\end{align*}
}
In particular, the series $F_5(t,1,0)-5$ coincides  with the series $F_{5c}(t)$ which we expressed in Proposition~\ref{prop:count5c}. 

It would be interesting to extend our bijective construction to strongly irreducible 5-maps, so as to obtain a combinatorial derivation of $F_5(t,x,y)$. These maps are characterized by the existence of (weighted) outdegree-constrained biorientations~\cite[Sec.12]{OB-EF-SL:Grand-Schnyder}, and the aim would be to show that the corresponding tree structures ---via the master bijection of~\cite{Bernardi-Fusy:dangulations}--- are characterized by local conditions only. 
 In another direction, one could try to extend our bijection to triangulated strongly irreducible $p$-maps, for $p\geq 6$. Such an extension has been carried out in bijections for simple triangulations and quadrangulations~\cite{albenque2013generic,Bernardi-Fusy:dangulations,PS06}.  
 Asking for the same two kinds of extensions can also be considered in the irreducible case, and appears as more tractable. The base case is that of triangulated irreducible 4-maps, so-called \emph{4c-triangulations}, for which a bijection (with ternary trees) is given in~\cite{Fu07b}. For the first extension, a combinatorial derivation of the generating function of rooted irreducible 4-maps via the method of slice decompositions is given in~\cite{BouttierG14} (which also gives the specification of the associated tree structures), and outdegree-constrained biorientations characterizing irreducible 4-maps are known~\cite[Sec.3.4]{OB-EF-SL:Grand-Schnyder}; for the second extension, a nice factorized formula for the counting coefficients suggests how to extend the bijective construction.          

Let us finally comment on the link between the enumeration of rooted 4c-triangulations (resp. 5c-triangulations) and the enumeration of rooted 4-connected (resp. 5-connected) triangulations.  
 The operation of adding a vertex of degree $4$ connected to the 4 outer vertices yields a bijection from rooted 4c-triangulations to rooted 4-connected triangulations with root-vertex degree $4$ (with one more vertex). Moreover, by root-edge deletion~\cite[Sec.4.3.2]{Fu07b} one can rationally express the generating function of rooted 4-connected triangulations in terms of $t$ and the generating function of rooted 4c-triangulations, which has a bijective derivation. The situation in the 5-connected case is a bit more involved. First, as seen in Lemma~\ref{lem:5co}, one needs to restrict rooted 5c-triangulations to a subfamily ---those with no outer vertex of degree $3$--- so that the operation of adding a vertex of degree $5$ connected to the 5 outer vertices yields a bijection to rooted 5-connected triangulations with root-vertex degree $5$. Nevertheless, as seen in Proposition~\ref{prop:count_5co}, this  generating function of this subfamily is just the the generating function $\Ffc(t)=F_5(t,1,0)-5$ of the whole family multiplied by a simple rational prefactor. What seems more difficult is to obtain a rational expression for the generating function $F_3(t,1,0)$ of rooted 5-connected triangulations in terms of $t$ and $F_5(t,1,0)$.  
 It however should be possible (though a bit involved, details omitted) to obtain by root-edge deletion a rational expression for $F_3(t,1,0)$ in terms of $\{t,F_5(t,1,0),F_6(t,1,0)\}$, thus giving special motivation for finding a bijective derivation of $F_6(t,1,0)$.  

\bigskip

\noindent{\bf Acknowledgments.} The author is very grateful to Olivier Bernardi and Shizhe Liang for the numerous fruitful discussions and collaboration~\cite{BernardiFL23,OB-EF-SL:Grand-Schnyder} on which this work builds. Partially supported by the project ANR-23-CE48-0018 (CartesEtPlus), and the project ANR-20-CE48-0018 (3DMaps).

\bibliographystyle{alpha}
\bibliography{biblio-Schnyder}

\end{document}